\numberwithin{equation}{section}
\numberwithin{subsection}{section}
\numberwithin{table}{section}
\numberwithin{figure}{section}
   \newtheorem{theorem}{Theorem}[section]
   \newtheorem{lemma}[theorem]{Lemma}
   \newtheorem{corollary}[theorem]{Corollary}
   \theoremstyle{remark}
   \theoremstyle{definition}
   \newtheorem{definition}[theorem]{Definition}
\providecommand {\norm}[1] {\lVert#1\rVert}
\newcommand{\beq}{\begin{equation}}
\newcommand{\eeq}{\end{equation}}
\newcommand{\T}{{\mathcal{T}}}
\newcommand{\E}{\mathbb{E}}
\newcommand{\M}{\mathbb{M}}
\newcommand{\Pp}{\mathbb{P}}
\newcommand{\R}{\mathbb{R}}
\newcommand{\Z}{\mathbb{Z}}
\newcommand{\vol}{{\rm vol}\,}
\newcommand{\D}{\mathcal{D}}
\numberwithin{equation}{section}
\numberwithin{figure}{section}
\begin{document}

\title[Statistical Topology]{Statistical Topology via Morse Theory Persistence and Nonparametric
Estimation}

\date{}

\author[P.Bubenik]{Peter Bubenik}
\address{Department of
Mathematics, Cleveland State University,
Cleveland, Ohio 44115-2214}
\email{p.bubenik@csuohio.edu}


\author[G.Carlson]{Gunnar Carlson}
\address{Department of Mathematics, Stanford University,
Stanford, California 94305}
\email{gunnar@math.stanford.edu}
\thanks{Support for the second author was partially funded by
DARPA, ONR, Air Force Office of Scientific Research, and NSF.}

\author[P.T. Kim]{Peter T. Kim}
\address{Department of Mathematics and Statistics, University of
Guelph, Guelph, Ontario N1G 2W1, Canada}
\email{pkim@uoguelph.ca}
\thanks{Support for the third author was
    partially funded by NSERC grant DG 46204.}

\author[Z-M. Luo]{Zhi--Ming Luo}
\address{Department of Statistics, Keimyung University, Dalseo-Gu, Daegu, 704-701, Korea}
\email{zluo@uoguelph.ca}

\subjclass[2000]{Primary 62C10,
62G08; Secondary 41A15, 55N99, 58J90}

\keywords{Bottleneck distance,
critical values,
geometric statistics,
minimax,
nonparametric regression,
persistent homology,
Plex,
Riemannian manifold,
sublevel sets.}

\begin{abstract}
In this paper we examine the use of topological methods for multivariate
statistics.  Using persistent homology from computational
algebraic topology, a random sample is used to construct estimators of
persistent homology.  This estimation procedure can then be
evaluated using the bottleneck distance between the estimated
persistent homology and the true persistent homology.  The connection
to statistics comes from the fact that when viewed as a nonparametric
regression problem, the bottleneck distance is bounded by the sup-norm
loss.  Consequently,
a sharp asymptotic minimax bound is determined under the sup--norm risk
over H\"{o}lder classes of functions for the nonparametric regression
problem on manifolds.  This provides good convergence properties for
the persistent homology estimator in terms of the expected bottleneck
distance.
\end{abstract}

\maketitle

\section{Introduction}
\label{sec.intro}
Quantitative scientists of diverse backgrounds are being asked to
apply the techniques of their specialty to data which is greater in
both size and complexity than that which has been studied previously.
Massive, multivariate data sets, for which traditional linear
methods are inadequate, pose challenges in representation,
visualization, interpretation and analysis.  A common finding is that
these massive multivariate data sets require the development of new
statistical methodology and that
these advances are dependent on increasing technical sophistication.
Two such data-analytic techniques that have recently come to the fore
are computational algebraic topology and geometric
statistics.

Commonly, one starts with data obtained from some induced
geometric structure, such as a curved submanifold of a numerical space, or, a singular
algebraic variety. The observed data is obtained as a random sample
from this space, and the objective is to statistically recover
features of the underlying space.

In computational algebraic topology, one attempts to recover qualitative
global features of the underlying data, such as connectedness,
or the number of holes, or the existence of obstructions to certain
constructions, based upon the random sample.  In other words, one
hopes to recover the underlying topology. An advantage of topology is
that it is stable under deformations and thus can potentially lead to
robust statistical procedures.
A combinatorial construction such as the alpha complex or the
\v Cech complex, see for example \cite{zomorodianCarlsson:computingPH},
converts the data into an object for which it
is possible to compute the topology. However, it is quickly apparent
that such a construction and its calculated topology depend on the
scale at which one considers the data. A multi--scale solution to this
problem is the technique of persistent homology. It quantifies the
persistence of topological features as the scale changes.  Persistent
homology is useful for visualization, feature detection and object
recognition. Applications of persistent
topology include protein structure analysis \cite{sofw:LFMPro}, gene
expression \cite{edmp:geneExpression}, and sensor
networks \cite{deSilvaGhrist:homologicalSensorNetworks}.  In a recent application
to brain image data,  a demonstration of persistent topology in discriminating
between two populations is exhibited \cite{ckb}.

In geometric statistics one uses the underlying Riemannian structure
to recover quantitative information concerning the underlying probability
distribution and functionals
thereof.  The idea is to extend statistical estimation techniques to functions
over Riemannian manifolds, utilizing the Riemannian
structure.
One then considers the magnitude of the statistical accuracy of these
estimators.  Considerable progress has been
achieved in terms of optimal estimation \cite{hen,ef1,kk3,pe1,pe2,kok,kkl}.
Other related works include \cite{rr,ru,mr,ak,bhmr}.
There is also a growing interest in function estimation over manifolds in the
learning theory literature \cite{cs,sz,bn}; see also the references cited therein.

Although computational algebraic topology and geometric statistics appear dissimilar and seem to have different objectives, it has recently been noticed that they share a commonality through statistical sampling.  In particular, a pathway between them can be established by using elements of Morse theory. This is achieved through the fact that persistent homology can be applied to Morse functions and comparisons between two Morse functions can be assessed by a metric called the bottleneck distance.  Furthermore, the bottleneck distance is bounded by the sup--norm distance between the two Morse functions on some underlying manifold.  This framework thus provides just enough structure for a statistical interpretation.  Indeed, consider a nonparametric regression problem on some manifold.  Given data in this framework one can construct a nonparametric regression function estimator such that the persistent homology associated with this estimated regression function is an estimator for the persistent homology of the true regression function, as assessed by the bottleneck distance.
Since this will be bounded by the sup-norm loss, by providing a sharp sup--norm minimax estimator of the regression function, we can effectively bound the expected bottleneck distance between the estimated persistent homology and the true persistent homology.  Consequently, by showing consistency in the sup-norm risk, we can effectively show consistency in the bottleneck risk for persistent homology which is what we will demonstrate.  Let us again emphasize that the pathway that allows us to connect computational algebraic topology with geometric statistics is Morse theory.  This is very intriguing in that a pathway between the traditional subjects of geometry and topology is also Morse theory.

We now summarize this paper.  In Section \ref{sec.top} we
will lay down the topological preliminaries needed to state
our main results.  In Section \ref{sec.geo}, we go over the
preliminaries needed for nonparametric regression on a
Riemannian manifold.  Section \ref{sec.main} states the main
results where
sharp sup-norm minimax bounds consisting of constant and rate,
and sharp sup-norm estimators are presented.  The connection to
bounding the persistent homology estimators thus ensues.
Following this in Section \ref{sec.app}, a brief discussion of
the implementation is given.
Proofs to the main results are
collected in Section \ref{sec.proofs}.  An Appendix that contains
some technical material is included for completeness.

\section{Topological Preliminaries}
\label{sec.top} \setcounter{equation}{0}
Let us assume that $\M$ is a $d-$dimensional compact Riemannian manifold
and suppose $f:\M \to \R$ is some smooth function.  Consider the sublevel set, or, lower excursion set,
\beq
\label{sublevel}
 \M_{f \leq r} := \{x
\in \M \ | \ f(x) \leq r\} = f^{-1}( (-\infty,r]).
\eeq
It is of interest to note that
for certain classes of smooth functions, the topology of $\M$ can
be approached by studying the geometry of the function.

To be more precise, for some smooth $f:\M \to \R$, consider a
point $p \in \M$ where in local coordinates the derivatives,
$\partial f /\partial x_j$ vanishes.
Then that point is called a critical point, and the evaluation $f(p)$
is called a critical value.  A critical point $p \in \M$ is called
non-degenerate if the Hessian $(\partial^2 f/\partial_i \partial_j)$ is
nonsingular.  Such functions are called Morse functions.  Later we will
see that differentiability is not needed when approached homologically.

The geometry of Morse functions can completely
characterize the homotopy type of $\M$
by the way in which topological characteristics of sublevel
sets (\ref{sublevel}) change at critical points.  Indeed
classical Morse theory tells us that the homotopy type of
(\ref{sublevel}) is characterized by attaching a cell whose
dimension is determined by the number of negative eigenvalues of
the Hessian at a critical point to the boundary of the
set (\ref{sublevel}) at the critical point.  This indeed is a
pathway that connects geometry with topology, and one in which
we shall also use to bridge statistics.  Some background material
in topology and Morse theory is provided in Appendices \ref{sec:app-1} and \ref{sec:app-2}.

As motivation
let us consider a real valued function $f$ that is a
mixture of two bump functions on the disk of radius $10$ in $\R^2$, see
Figure \ref{figure:9}.


\begin{figure}[ht]
\begin{center}
\includegraphics[width=5.5cm]{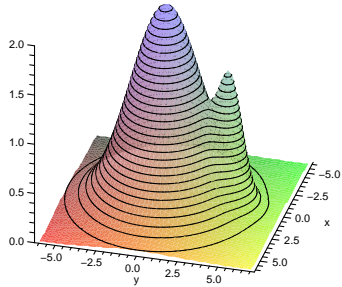}
\end{center}
\caption{A mixture of two bump functions and various contours below which are
the sublevel sets.}\label{figure:9}
\end{figure}

In this example, the maximum of $f$ equals $2$, so $\M_{f\leq 2} = \M$.
This sublevel set is the disk and therefore has no interesting topology since the disk is
contractible. In contrast, consider the sublevel sets when $r=1$,
$1.2$, and $1.5$ (see Figures \ref{figure:10}, \ref{figure:12} and \ref{figure:15}).

\begin{figure}[t]
\begin{center}
\includegraphics[width=3cm]{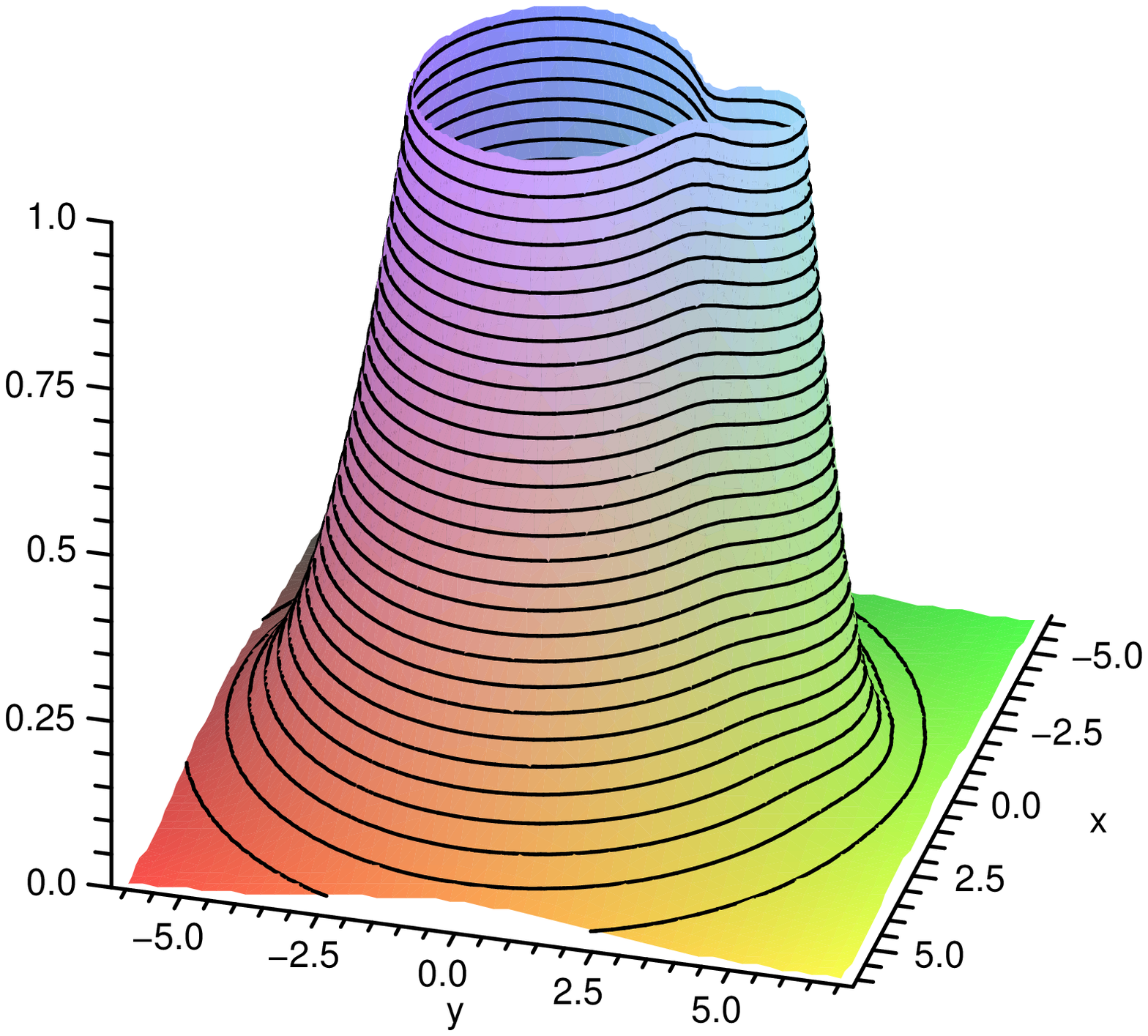}
\includegraphics[width=3cm]{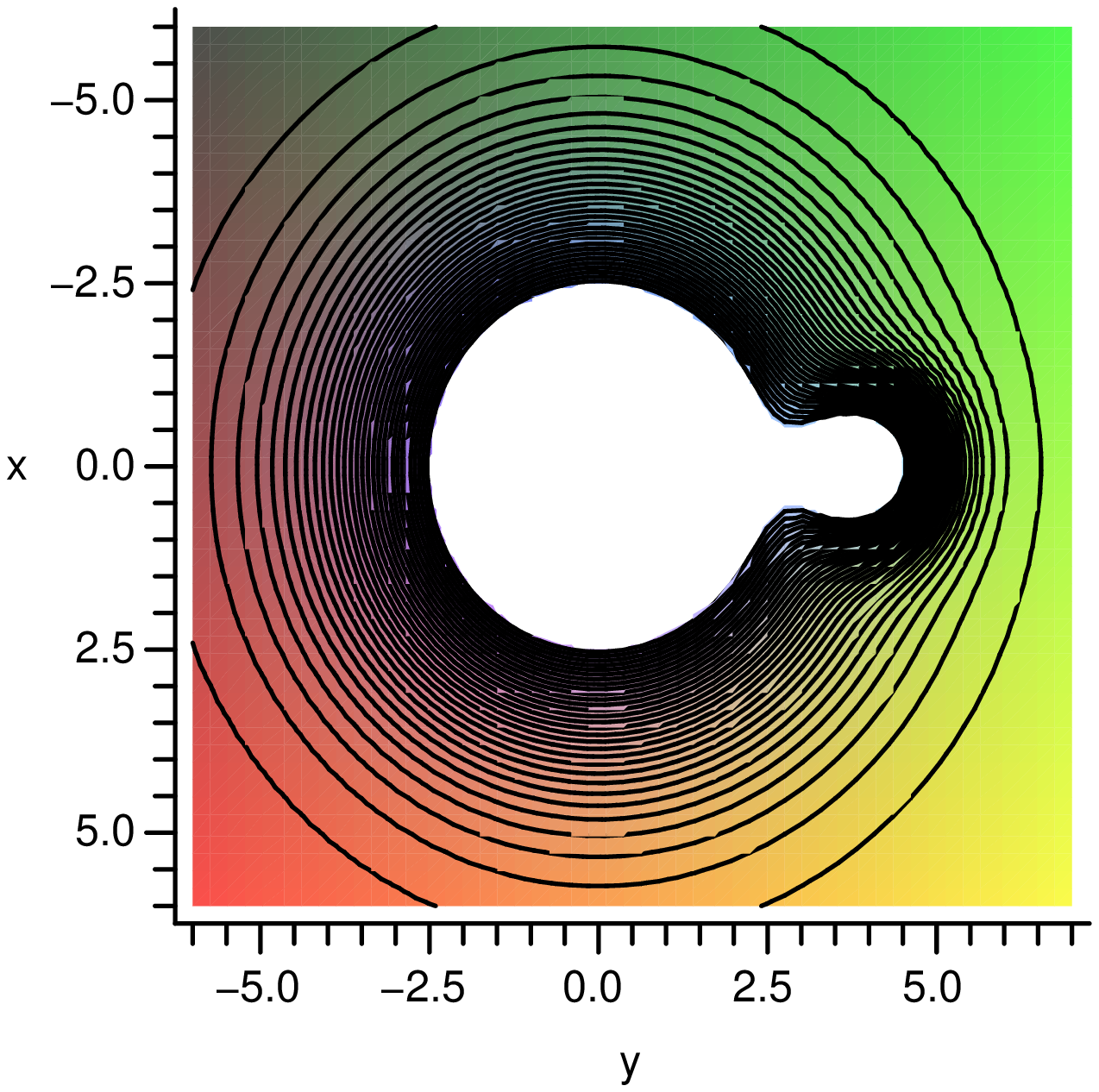}
\end{center}
\caption{The sublevel set at $r=1$ has one hole.}
\label{figure:10}
\end{figure}

\begin{figure}
\begin{center}
\includegraphics[width=3cm]{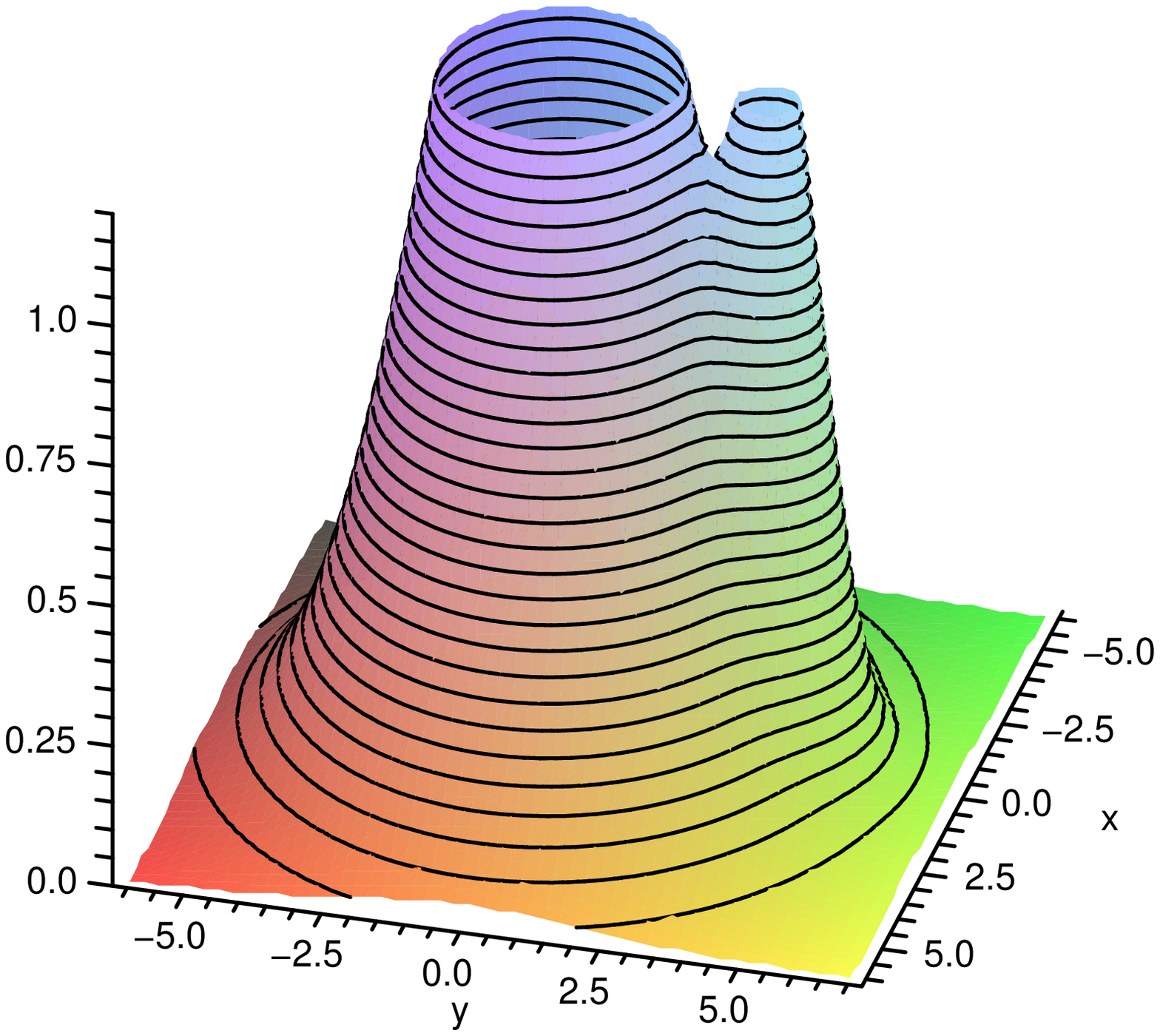}
\includegraphics[width=3cm]{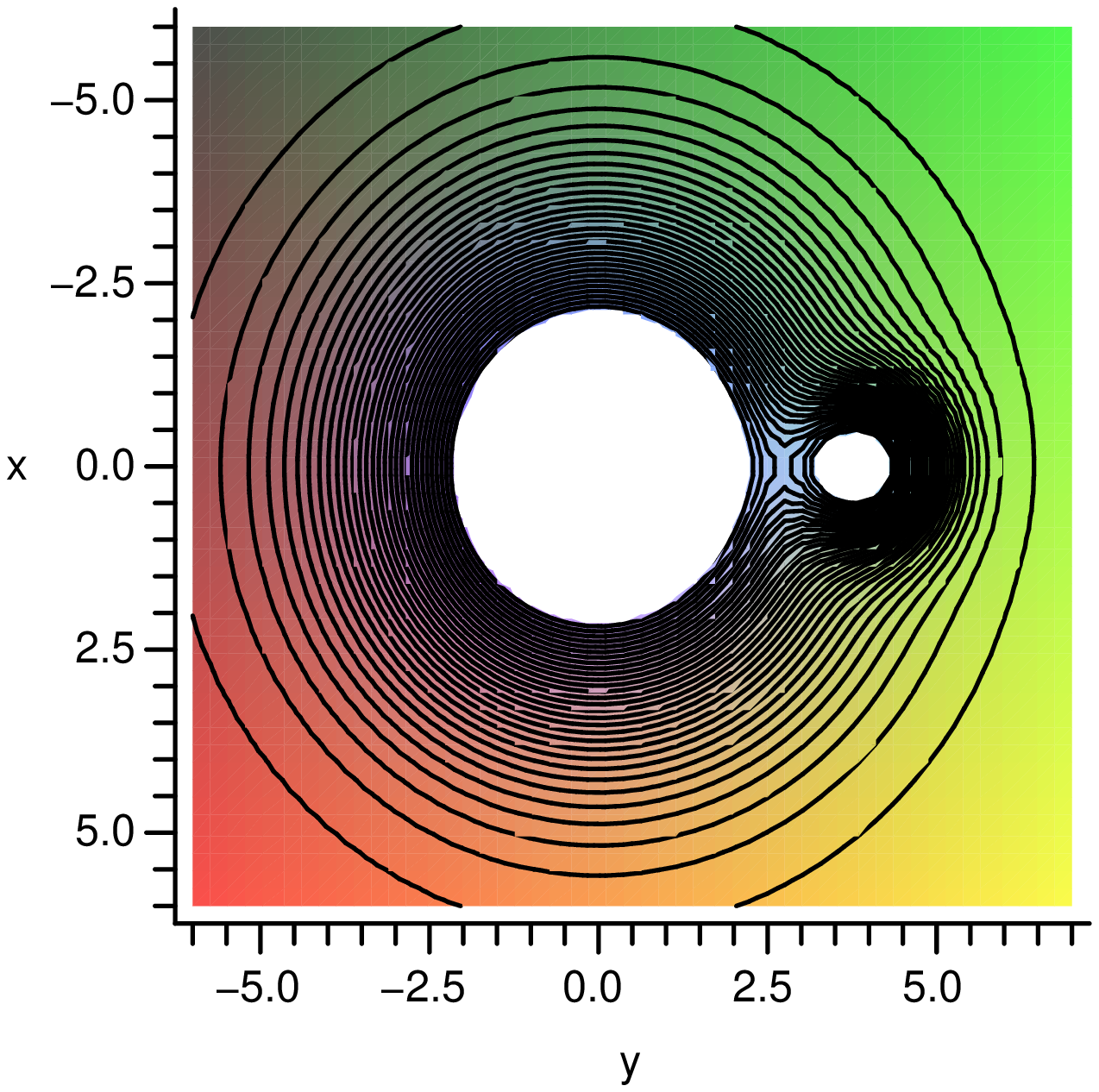}
\end{center}
\caption{The sublevel set at $r=1.2$ has two holes.}
\label{figure:12}
\end{figure}

\begin{figure}
\begin{center}
\includegraphics[width=3cm]{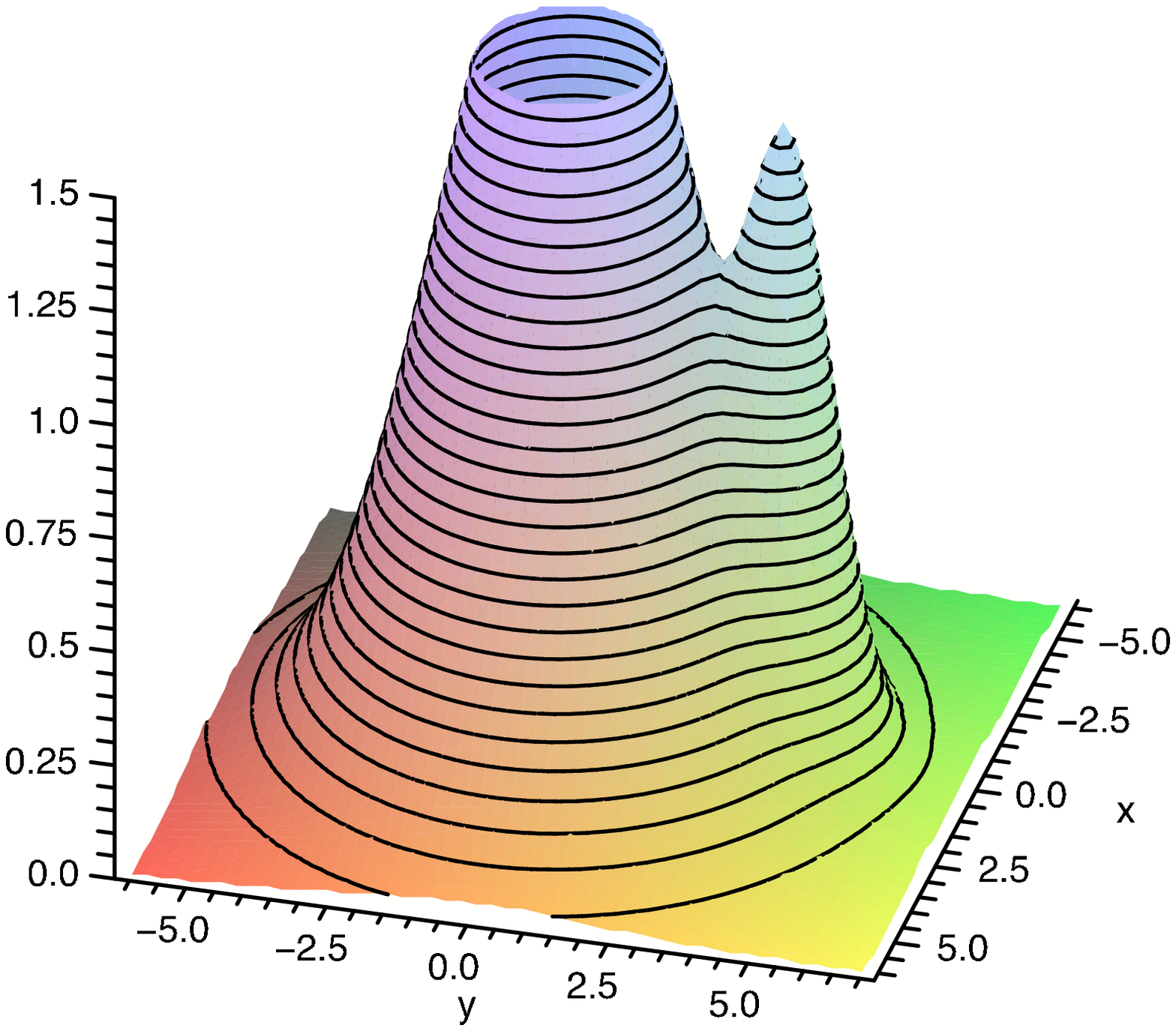}
\includegraphics[width=3cm]{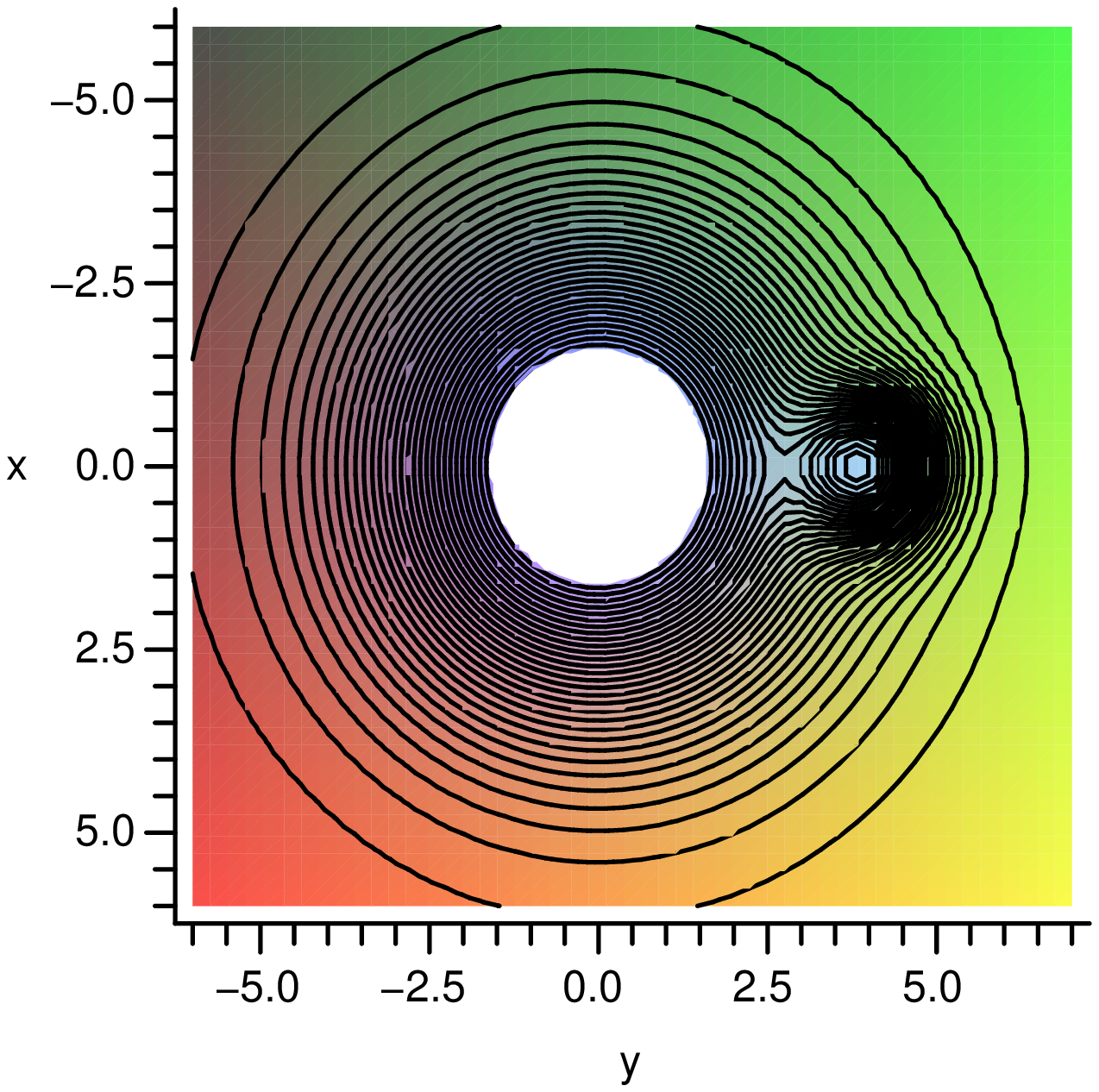}
\end{center}
\caption{The sublevel set at $r=1.5$ has one hole.}
\label{figure:15}
\end{figure}

In these cases, the sublevel sets $\M_{f\leq r}$ have non-trivial topology, namely one, two and one hole(s) respectively, each of whose boundaries is one-dimensional. This topology is detected algebraically by the first integral homology group $H_1(\M_{f\leq r})$ which will be referred to as the {homology} of degree $1$ at level $r$. This group enumerates the topologically distinct cycles in the sublevel set. In the first and third cases, for each integer $z \in \Z$, there is a cycle which wraps around the hole $z$ times. We have $H_1(\M_{f\leq r}) = \Z$.  In the second case, we have two generating non-trivial cycles and so $H_1(\M_{f\leq r}) = \Z \oplus \Z$.  For a review of homology the reader can consult Appendix \ref{sec:app-1} for related discussions.

\subsection{Persistent topology}
\label{ssec:perstop}
A computational procedure for determining how the homology persists as the level $r$ changes is provided in \cite{elz:tPaS,zomorodianCarlsson:computingPH}. 
In the above example there are two persistent homology classes (defined below). One class is born when $r=1.1$, the first sublevel set that has two holes, and dies at $r=1.4$ the first sublevel set for which the second hole disappears. The other class is born at $r=0$ and persists until $r=2$. Thus the persistent homology can be completely described by the two ordered pairs $\{ (1.1,1.4), (0,2) \}$. This is called the reduced persistence diagram (defined below) of $f$, denoted $\bar{\D}(f)$.  For a persistent homology class described by $(a,b)$, call $b-a$ its lifespan.  From the point of view of an experimentalist, a long-lived persistent homology is evidence of a significant feature in the data, while a short-lived one is likely to be an artifact.

We now give some precise definitions.

\begin{definition}
  Let $k$ be a nonnegative integer. Given $f:\M \to \R$ and $a\leq b \in \R$   the inclusion of sublevel sets $i_a^b: \M_{f\leq a} \hookrightarrow \M_{f\leq     b}$ induces a map on homology
\begin{equation*}
H_k(i_a^b): H_k(\M_{f\leq a}) \to H_k(\M_{f\leq b}).
\end{equation*}
The image of $H_k(i_a^b)$ is the persistent homology group from $a$ to $b$.
Let $\beta_a^b$ be its dimension. This counts the independent homology classes
which are born by time $a$ and die after time $b$.

  Call a real number $a$ a homological critical value of $f$ if for all
  sufficiently small $\epsilon > 0$ the map $H_k(i_{a-\epsilon}^{a+\epsilon})$
  is not an isomorphism.
Call $f$ tame if it has finitely many homological critical values, and for
each $a\in \R$, $H_k(\M_{f\leq a})$ is finite dimensional.
In particular, any Morse function on a compact manifold is tame.

Assume that $f$ is tame.
Choose $\epsilon$ smaller than the distance between any two homological critical values.
For each pair of homological critical values $a<b$, we define their multiplicity
$\mu_a^b$ which we interpret as the number of independent homology classes that
are born at $a$ and die at $b$.
We count the homology classes born by time $a+\epsilon$ that die after time
$b-\epsilon$. Among these subtract those born by $a-\epsilon$ and subtract those
that die after $b+\epsilon$. This double counts those born by $a-\epsilon$ that die
after $b+\epsilon$, so we add them back. That is,
\begin{equation*}
  \label{eq:multiplicity}
  \mu_a^b = \beta_{a+\epsilon}^{b-\epsilon} -
  \beta_{a-\epsilon}^{b-\epsilon} - \beta_{a+\epsilon}^{b+\epsilon}
  + \beta_{a-\epsilon}^{b+\epsilon}.
\end{equation*}

The persistent homology of $f$ may be encoded as follows. The reduced persistence diagram of $f$, $\bar{\D}(f)$, is the multiset
of pairs $(a,b)$ together with their multiplicities $\mu_a^b$. We call
this a diagram since it is convenient to plot these points on the plane.
We will see that it is useful to add homology classes which are born and die at
the same time. Let the persistence diagram of $f$, $\D(f)$, be given by
the union of $\bar{\D}(f)$ and $\{(a,a)\}_{a \in \R}$ where each $(a,a)$ has
infinite multiplicity.
\end{definition}

\subsection{Bottleneck distance}
\label{ssec:botdist}
Cohen--Steiner, Edelsbrunner and Harer \cite{csEH:stabilityOfPD} introduced the following  metric on the space of persistence diagrams. This metric is called the bottleneck distance and it bounds the Hausdorff distance.  It is given by
\begin{equation} \label{eq:stability}
  d_B( \D(f), \D(g) ) = \inf_{\gamma} \sup_{p \in \D(f)}
  \norm{p-\gamma(p)}_{\infty},
\end{equation}
where the infimum is taken over all bijections $\gamma: \D(f) \to
\D(g)$ and $\| \cdot \|_\infty$ denotes supremum--norm over sets.

For example, let $f$ be the function considered at the start of this section.
Let $g$ be a unimodal, radially-symmetric function on the same domain with
maximum $2.2$ at the origin and minimum $0$.
We showed that $\bar{\D}(f) = \{(1.1,1.4),(0,2)\}$. Similarly,
$\bar{\D}(g) = (0,2.2)$. The bottleneck distance is achieved by
the bijection $\gamma$ which maps $(0,2)$ to $(0,2.2)$ and $(1.1,1.4)$
to $(1.25,1.25)$ and is the identity on all `diagonal' points $(a,a)$. Since
the diagonal points have infinite multiplicity this is a bijection. Thus,
$d_B(\D(f),\D(g)) = 0.2$.

In \cite{csEH:stabilityOfPD}, the following result is proven:
\begin{equation}\label{eq:stab.inequality}
  d_B(\D(f),\D(g)) \leq \norm{f-g}_{\infty}
\end{equation}
where $f,g:\M \to \R$ are tame functions and $\| \cdot \|_\infty$ denotes sup--norm over
functions.

\subsection{Connection to Statistics}
\label{ssec:connection}
It is apparent that most articles on persistent topology do not as of yet
incorporate
statistical foundations although they do observe them heuristically.
The approach in \cite{niyogiSmaleWeinberger} combines
topology and statistics and calculates how much data
is needed to guarantee recovery of the underlying topology of the
manifold. A drawback of that technique is that it supposes that the size
of the smallest features of the data is known a priori.  To date the most
comprehensive parametric statistical approach is contained in
\cite{bubenikKim:statisticalAtPH}.  In this paper,
the unknown probability distribution is assumed to
belong to a parametric family of distributions. The data is then used to estimate
the level so as to recover the persistent topology of the underlying distribution.

As far as we are aware no statistical foundation for the nonparametric case has been formulated although \cite{csEH:stabilityOfPD} provide the topological machinery for making a concrete statistical connection.  In particular, persistent homology of a function is encoded in its reduced persistence diagram.  A metric on the space of persistence diagrams between two functions is available which bounds the Hausdorff distance and this in turn is bounded by the sup--norm distance between the two functions.  Thus by viewing one function as the parameter, while the other is viewed as its estimator, the asymptotic sup--norm risk bounds the expected Hausdorff distance thus making a formal nonparametric statistical connection.  This in turn lays down a framework for topologically classifying clusters in high dimensions.

\section{Nonparametric regression on manifolds}
\label{sec.geo}
\setcounter{equation}{0}
Consider the following nonparametric regression problem
\begin{equation}\label{wn.1}
y =  f(x) + \varepsilon, \
\  x \in \M,
\end{equation}
where $\M$ is a $d-$dimensional compact Riemannian manifold,
$f:\M \to \R$ is the regression function and
$\varepsilon$ is a normal random variable with mean zero
and variance $\sigma^2 > 0$.

For a given sample $(y_1,x_1),\ldots, (y_n,x_n)$, let $\tilde{f}$ be an
estimator of $f$ based
on the regression model (\ref{wn.1}).  We
will assess the estimator's performance by the sup--norm loss:
\begin{equation}
\label{sup-norm}
\parallel\tilde{f} - f\parallel_\infty =\sup_{x\in \M}
|\tilde{f}(x) - f(x)|.
\end{equation}
Furthermore, we will take as the parameter space, $\Lambda(\beta, L)$,
the class of H$\ddot{\mbox{o}}$lder functions
\begin{equation}
\label{holder}
\Lambda(\beta, L) = \{f: \M \to \R \mid |f(x) -f(z)|\leq L\rho(x,z)^\beta, x,z\in \M\},
\end{equation}
where $0<\beta \leq 1$ and $\rho$ is the Riemannian metric on $\M$,
i.e., $\rho(x,z)$ is the geodesic length (determined by the metric tensor)
between $x, z \in \M$.

For $w(u)$, a
continuous non-decreasing function
which increases no faster than a power of its argument as $u\to \infty$ with $w(0) = 0$,
we define the sup-norm minimax risk by
\begin{equation}
\label{minimax-risk}
r_n(w, \beta, L) = \inf_{\tilde{f}}\sup_{f\in\Lambda(\beta, L)}
\E w(\psi_n^{-1}\parallel\tilde{f} - f\parallel_\infty),
\end{equation}
where the $\psi_n \to 0$ is the sup--norm minimax rate, as $n \rightarrow \infty$, and
$\E$ denotes expectation with respect to (\ref{wn.1}) where $\varepsilon$ is normally
distributed.

\subsection{Asymptotic equidistance on manifolds}
\label{ssec:asymeqdis}
Consider a set of points $z_i \in \M$, $i=1, \cdots, m$.
We will say that the set of points is asymptotically equidistant
if
\beq\label{asymp.equid}
\inf_{i \neq j}\rho (z_i , z_j) \sim \frac{({\rm vol}\, \M)^{1/d}}{m}
\eeq
as $m \rightarrow \infty$ for all $i,j = 1, \ldots , m$, where for
two real sequences $\{a_m\}$ and $\{b_m\}$,
$a_m \sim
b_m$ will mean $|a_m/b_m|\rightarrow 1$ as $m \rightarrow \infty$, this implies
that
\beq\label{asymp.equid-1}
\frac{\max_j\min_{i\neq j} \rho (z_i , z_j)}{\min_j\min_{i\neq j}
\rho (z_i , z_j)}
\sim 1 \ ,
\eeq
as $m\rightarrow \infty$.  It will be assumed throughout that the manifold admits a
collection of asymptotically equidistant points.  This is certainly true for the
sphere (in any dimension), and will be true for all compact Riemannian manifolds
since the injectivity radius is strictly positive.  We note that \cite{pe2}
makes use of this condition as well.

We will need the following constants
\beq\label{const-1}
C_0 = L^{d/(2\beta+d)}
\left ( \frac{\sigma^2 {\rm vol}\,\M \ (\beta+d)d^2}
{{\rm vol}\, \mathbb{S}^{d-1} \beta^2} \right)^{\beta/(2\beta+d)} ,
\eeq
\beq\label{rate-1}
\psi_n = \left(\frac{\log n}{n}\right)^{\beta/(2\beta +d)} ,
\eeq
and `vol' denotes the volume of
the object in question, where $\mathbb{S}^{d-1}$ is the $(d-1)-$dimensional unit sphere with
${\rm vol}\ \mathbb{S}^{d-1}=2\pi^{d/2}/\Gamma (d/2)$ and $\Gamma$ is the gamma function.

Define the geodesic ball of radius $r > 0$ centered at $z \in \M$ by
\beq\label{geoball}
B_z(r) = \left\{ x \in \M \left| \rho(x,z) \leq r \right. \right\}.
\eeq
We have the following result whose proof will be detailed in Section \ref{ssec.upper}
\begin{lemma}
\label{lemma-aequi}
Let $z_i \in \M, i = 1, \cdots, m$, be asymptotically equidistant.
Let $\lambda = \lambda(m)$ be the largest
number such that $\bigcup^m_{i=1}\overline{B}_{z_i}(\lambda^{-1}) = \M$, where
$\overline{B}_{z_i}(\lambda^{-1})$ is the closure of the
geodesic ball of radius $\lambda^{-1}$
around $z_i$. Then there is a $C_1 >0$ such that
$
 \limsup_{m \to \infty} m\lambda(m)^{-d} \leq C_1.
$
\end{lemma}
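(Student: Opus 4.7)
By construction $\lambda(m)^{-1}$ is the covering radius of the design $\{z_1,\ldots,z_m\}$, that is, $\lambda(m)^{-1}=\max_{x\in\M}\min_i\rho(x,z_i)$, since it is the smallest $r$ for which the closed balls $\overline{B}_{z_i}(r)$ exhaust $\M$. The target inequality $\limsup m\lambda(m)^{-d}\le C_1$ is therefore equivalent to saying that this covering radius is $O(m^{-1/d})$. The plan has two stages: first I would bound the minimum separation $\delta_m:=\inf_{i\neq j}\rho(z_i,z_j)$ by a disjoint-ball packing argument, and then transfer this bound to the covering radius via the asymptotic equidistance hypothesis.

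The packing stage rests on a uniform small-ball volume expansion on $\M$. Compactness of $\M$ gives a positive lower bound on the injectivity radius, so writing the Riemannian volume form in normal coordinates yields $\vol B_z(r)=\frac{\vol\mathbb{S}^{d-1}}{d}\,r^d(1+O(r))$ as $r\to 0$, uniformly in $z\in\M$. Because $\delta_m\to 0$ and the open balls $B_{z_i}(\delta_m/2)$ are pairwise disjoint by the triangle inequality, summing their volumes gives $m\cdot\frac{\vol\mathbb{S}^{d-1}}{d}(\delta_m/2)^d(1+o(1))\le\vol\M$, so that $\limsup_m m\,\delta_m^d\le 2^d d\,\vol\M/\vol\mathbb{S}^{d-1}$.

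For the transfer stage I would show $\lambda(m)^{-1}\le c\,\delta_m$ for some constant $c=c(\M)$. The ratio condition (\ref{asymp.equid-1}) states that $\max_j\min_{i\neq j}\rho(z_i,z_j)\sim\delta_m$, which forces the $z_i$ to be asymptotically a near-maximal $\delta_m$-packing of $\M$: if some $x^*\in\M$ were at distance exceeding $c\delta_m$ from every $z_i$ for a suitably large $c=c(\M)$, then adjoining $x^*$ would produce a packing at essentially the same scale, contradicting the near-maximality encoded in (\ref{asymp.equid-1}). Combining the two stages then yields $\limsup_m m\lambda(m)^{-d}\le (2c)^d\,d\,\vol\M/\vol\mathbb{S}^{d-1}=:C_1$. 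The main obstacle lies precisely in this transfer: (\ref{asymp.equid-1}) only controls distances among the $z_i$ themselves and does not a priori rule out a distant ``hole'' in the design, so the covering/packing duality must be extracted from the near-maximality with some care; the other ingredients are routine volume comparisons using compactness of $\M$.
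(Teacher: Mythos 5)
Your plan follows essentially the same route as the paper's own proof: both reduce the question to a disjoint-ball packing bound obtained from the small-ball volume expansion in normal coordinates, and both must then pass from a packing-scale estimate to a covering-radius estimate using the asymptotic equidistance hypothesis. Where you pack balls of radius $\delta_m/2$, the paper introduces $(\lambda')^{-1}$ as the largest radius for which the closed balls $\overline{B}_{z_i}((\lambda')^{-1})$ are pairwise disjoint (essentially the same quantity up to a factor of $2$), sums volumes to get $\limsup_m m(\lambda')^{-d} \leq d\,\vol(\M)/\vol(\mathbb{S}^{d-1})$, and then simply writes $\lambda^{-1} = c(m)(\lambda')^{-1}$ with $c(m)>1$ and $c(m)\to 1$ as $m\to\infty$, attributing this to asymptotic equidistance without further argument.

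The gap you flag is real, and you should be aware that the paper's proof does not actually close it either. The assertion that $c(m)\to 1$ is stated but not derived, and taken literally it is even stronger than needed and suspicious: for a cubical lattice in $\R^d$ the covering radius exceeds the packing radius by a factor of roughly $\sqrt{d}$, which does not tend to $1$. What the lemma actually requires is only that $c(m)$ stay bounded, but bounding $c(m)$ still needs some control on voids in the design, and neither your sketch nor the paper extracts that explicitly from (\ref{asymp.equid}) or (\ref{asymp.equid-1}), since those conditions only speak to nearest-neighbor distances among the $z_i$. The intended reading is evidently that ``asymptotically equidistant'' means the $z_i$ form a quasi-uniform net whose covering and packing radii are comparable; under that reading your stage-two argument closes as you describe, since a point $x^*$ at distance greater than $c\delta_m$ from all $z_i$ would let you append a point to the design without destroying the separation, contradicting the forced near-maximality of the $\delta_m$-packing implied by $\delta_m\sim(\vol(\M)/m)^{1/d}$. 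In short: same approach, and your self-identified obstacle is precisely the step the paper asserts rather than proves.
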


\subsection{An estimator}
\label{ssec:estimator}
Fix a $\delta > 0$ and let
\[
m=\left[C_1\left(\frac{L(2\beta+d)}{\delta C_0 d\psi_n}\right)^{d/\beta}\right],
\]
where $C_1$ is a sufficiently large constant from Lemma \ref{lemma-aequi},
hence $m \leq n$ and
$m \to \infty$ when $n \to \infty$ and for $s \in \R$, $[s]$ denotes the greatest
integer part.

For the design points $\left\{x_i: i = 1, \ldots, n\right\}$ on $\M$,
assume that
$\left\{x_{i_j} \in \M, j=1, \ldots, m\right\}$ is
an asymptotically equidistant subset on $\M$.  Let $A_j, j = 1, \ldots, m$, be a
partition of $\M$ such that $A_j$ is the set of those $x \in \M$ for which $x_{i_j}$
is the closest point in the subset $\{x_{i_1}, \ldots, x_{i_m} \}$.  Thus, for
$j=1,\ldots , m$,
\beq\label{partition}
A_j = \left\{x\in \M \mid \rho(x_{i_j}, x) = \min_{k=1, \ldots, m}\{\rho(x_{i_k}, x)\}\right\}.
\eeq

Let $A_j$, $j=1,\ldots ,m$ be as in (\ref{partition}) and
define $\mathrm{1}_{A_j}(x)$ to be the indicator function on the set $A_j$
and consider the estimator
\beq
\label{estimator}
\hat{f}(x) = \sum^m_{j=1}\hat{a}_j \mathrm{1}_{A_j}(x),
\eeq
where for $L>0$, $0 < \beta \leq 1$,
\[
\hat{a}_j =\frac{\sum_{i=1}^n K_{\kappa, x_{i_j}}(x_{i})y_i}{\sum_{i=1}^n
K_{\kappa, x_{i_j}}(x_{i})},
\]
\[
K_{\kappa, x_{i_j}}(\omega)=\left(1-(\kappa \rho(x_{i_j}, \omega))^\beta \right)_+ ,
\]
\[
\kappa = \left( \frac{C_0\psi_n}{L}\right)^{-1/\beta} ,
\]
and $s_+=\max(s,0)$, $s\in \R$.
We remark that when $m$ is sufficiently large hence $\kappa$ is also
large, the support set of $K_{\kappa, x_{i_j}}(\omega)$ is the closed
geodesic ball $\overline{B}_{x_{i_j}}(\kappa^{-1})$ around $x_{i_j}$ for $j=1,\ldots, m$.

\section{Main Results}
\label{sec.main} \setcounter{equation}{0}
We now state the main results of this paper.
The first result provides an
upper bound for the estimator (\ref{estimator}), where the function
$w(u)$ satisfies $w(0)=0$, $w(u)=w(-u)$, $w(u)$ does not decrease, and
$w(u)$ increases not faster than a power as $u \rightarrow \infty$.

\begin{theorem}\label{thm-upper}
For the regression model {\rm (\ref{wn.1})} and the estimator {\rm (\ref{estimator})},
we have
\[
\sup_{f \in \Lambda (\beta, L)}\E w \left(\psi_n^{-1} \left\|\hat{f} -
f\right\|_\infty\right)
\leq w\left(C_0\right),
\]
as $n \to 0$,
where
$\psi_n = (n^{-1} \log  n)^{\beta/(2\beta+d)}$.
\end{theorem}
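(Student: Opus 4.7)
The plan is a standard bias--variance decomposition, with careful tracking of constants so that the asymptotic sup--norm risk comes out to $C_0$. Fix $j$ and $x \in A_j$; since $\hat f$ is constant on $A_j$, write $\hat f(x) - f(x) = b_j(x) + \zeta_j$, where
\[
b_j(x) = \frac{\sum_i K_{\kappa,x_{i_j}}(x_i)[f(x_i)-f(x)]}{\sum_i K_{\kappa,x_{i_j}}(x_i)}, \qquad \zeta_j = \frac{\sum_i K_{\kappa,x_{i_j}}(x_i)\varepsilon_i}{\sum_i K_{\kappa,x_{i_j}}(x_i)}.
\]
Then $\zeta_j\sim N(0,\sigma_j^2)$ with $\sigma_j^2=\sigma^2\sum K^2/(\sum K)^2$, and the sup--norm loss is bounded by $\max_j\sup_{x\in A_j}|b_j(x)|+\max_j|\zeta_j|$.

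For the bias, the kernel vanishes outside $B_{x_{i_j}}(\kappa^{-1})$, and $x\in A_j$ satisfies $\rho(x,x_{i_j})\leq \lambda^{-1}$. Using H\"older continuity and the subadditivity $(a+b)^\beta\leq a^\beta+b^\beta$ valid for $0<\beta\leq 1$, together with a Riemannian integral approximation of the weighted sums in geodesic polar coordinates centered at $x_{i_j}$ (using $\vol B_z(r)\sim r^d\vol\mathbb{S}^{d-1}/d$ as $r\to 0$), one obtains
\[
\sup_{x\in A_j}|b_j(x)| \leq L\lambda^{-\beta} + L\cdot\frac{d\kappa^{-\beta}}{2\beta+d}(1+o(1)).
\]
The chosen $\kappa$ gives $L\kappa^{-\beta}=C_0\psi_n$, while Lemma \ref{lemma-aequi} and the choice of $m$ yield $L\lambda^{-\beta}\leq \delta C_0\psi_n d/(2\beta+d)$; thus the bias is at most $(1+\delta)dC_0\psi_n/(2\beta+d)+o(\psi_n)$, uniformly in $j$.

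The same integral approximation applied to $\sum K^2$ gives $\sigma_j^2\sim \frac{2\sigma^2\,\vol\M\cdot d(\beta+d)}{\vol\mathbb{S}^{d-1}(2\beta+d)}\cdot\frac{\kappa^d}{n}$, uniformly in $j$. A Gaussian union bound (or Borell--Sudakov--Tsirelson) then yields $E\max_j|\zeta_j|\leq \sigma_{\max}\sqrt{2\log(2m)}(1+o(1))$. Substituting $\kappa^d=(L/C_0\psi_n)^{d/\beta}$ and $\log m\sim d\log n/(2\beta+d)$, and invoking the definition (\ref{const-1}) of $C_0$, the arithmetic collapses to $\psi_n^{-1}E\max_j|\zeta_j|\to 2\beta C_0/(2\beta+d)$. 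Adding the bias bound gives $\psi_n^{-1}E\norm{\hat f-f}_\infty\leq C_0(1+\delta d/(2\beta+d))+o(1)$, and letting $\delta\to 0$ extracts the sharp constant $C_0$.

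To upgrade this $L^1$ statement to $Ew(\cdot)$, use the Gaussian concentration $P(\max_j|\zeta_j|>\sigma_{\max}(\sqrt{2\log m}+t))\leq e^{-t^2/2}$: since $w$ grows only polynomially, this exponential tail kills the high-deviation contribution, and continuity of $w$ yields $Ew(\psi_n^{-1}\norm{\hat f-f}_\infty)\to w(C_0)$. The principal obstacle is the exact arithmetic: the bias contributes $dC_0/(2\beta+d)$ and the stochastic term $2\beta C_0/(2\beta+d)$, and these must sum to exactly $C_0$; this forces the specific form of $C_0$ in (\ref{const-1}) and requires the precise Riemannian-volume expansion together with a careful accounting of the free parameter $\delta$, which controls how finely the equidistant grid $\{x_{i_j}\}$ refines the kernel support of radius $\kappa^{-1}$.
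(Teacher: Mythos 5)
Your proposal is essentially the paper's argument: the same bias--variance split at the grid points $\{x_{i_j}\}$, the same Riemannian polar-coordinate asymptotics giving $\mathrm{var}(\hat a_j)\sim \sigma^2\kappa^d\,\frac{2d(\beta+d)\,\vol\M}{n(2\beta+d)\vol\mathbb{S}^{d-1}}$, the same $L\kappa^{-\beta}d/(2\beta+d)$ kernel-bias computation, and the same appeal to Lemma \ref{lemma-aequi} to control the grid-refinement term $L\lambda^{-\beta}$. The only genuine divergence is in how you convert the stochastic bound into a bound on $\E w(\cdot)$: you bound $\E\max_j|\zeta_j|$ directly by $\sigma_{\max}\sqrt{2\log m}(1+o(1))$ and then invoke Gaussian concentration plus polynomial growth of $w$ to control the tail, whereas the paper (Lemma \ref{Prob}) shows that $\Pp(\psi_n^{-1}\|\hat f - \E\hat f\|_\infty > (1+\delta)C_0\cdot 2\beta/(2\beta+d)) \to 0$ via a union bound and then upgrades to $\E w$ via Cauchy--Schwarz, using the uniform second-moment bound on $w$. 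Both are standard ways to pass from a high-probability statement to a risk bound; yours is marginally more direct at the cost of invoking a concentration inequality rather than just Cauchy--Schwarz. One thing worth flagging for both your version and the paper's: the free parameter $\delta$ enters the construction of the estimator through $m$, so the bound one actually proves for a fixed estimator is $\limsup_n \E w \le w((1+\delta)C_0)$; the clean constant $w(C_0)$ in the theorem statement is obtained only in the limit $\delta\to 0$, which implicitly changes the estimator. Your proposal inherits this same imprecision, so it is not a gap relative to the paper.
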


We have the asymptotic minimax result for the sup--norm risk.

\begin{theorem}
\label{thm1}
For the regression model {\rm (\ref{wn.1})}
\[
\lim_{n \to \infty}r_n(w, \beta, L) = w\left(C_0 \right).
\]
\end{theorem}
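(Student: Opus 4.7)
The upper half of the claim is immediate: by definition of the infimum in (\ref{minimax-risk}), $r_n(w,\beta,L)\le\sup_{f\in\Lambda(\beta,L)}\E w(\psi_n^{-1}\|\hat f-f\|_\infty)$ for the particular estimator $\hat f$ of Section \ref{ssec:estimator}, and Theorem \ref{thm-upper} bounds the right side by $w(C_0)$. The entire content of the theorem is therefore the matching lower bound $\liminf_{n\to\infty}r_n(w,\beta,L)\ge w(C_0)$, and I would prove it by the classical bump-and-hypercube strategy for sharp sup-norm constants (Korostelev--Tsybakov), adapted to the manifold via Lemma \ref{lemma-aequi}.

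Set the bandwidth $\kappa=(C_0\psi_n/L)^{-1/\beta}$ and let $z_1,\ldots,z_m\in\M$ be asymptotically equidistant with $m$ of the same order as in Section \ref{ssec:estimator}, so that the closed geodesic balls $\overline{B}_{z_j}(\kappa^{-1})$ almost tile $\M$ and are, asymptotically, disjoint. Define the bumps
\[
\phi_j(x)=L\kappa^{-\beta}\bigl(1-(\kappa\rho(z_j,x))^{\beta}\bigr)_+,
\]
each supported on $\overline{B}_{z_j}(\kappa^{-1})$ with peak value $\phi_j(z_j)=L\kappa^{-\beta}=C_0\psi_n$. For $\theta=(\theta_1,\ldots,\theta_m)\in\{-1,+1\}^m$ put $f_\theta=\sum_j\theta_j\phi_j$; disjointness of supports together with the fact that the profile $(1-t^\beta)_+$ lies in $\Lambda(\beta,1)$ guarantees $f_\theta\in\Lambda(\beta,L)$ for all $\theta$. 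Since $f_\theta(z_j)=\theta_jC_0\psi_n$, every estimator $\tilde f$ satisfies
\[
\psi_n^{-1}\|\tilde f-f_\theta\|_\infty\ge\psi_n^{-1}\max_{1\le j\le m}\bigl|\tilde f(z_j)-\theta_jC_0\psi_n\bigr|,
\]
reducing the problem to the simultaneous estimation of $m$ binary parameters.

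For the reduced problem, the observations lying in the Voronoi cell $A_j$ of $z_j$ carry essentially all information about $\theta_j$; by asymptotic equidistribution they number $\sim n/m$, and because the $\phi_j$ have disjoint supports the $m$ local experiments are nearly independent Gaussian location problems. Placing the uniform prior on $\{-1,+1\}^m$, the Bayes risk is bounded below (up to $w$ and $\psi_n^{-1}$) by the expected maximum of $m$ nearly independent centered Gaussians with variance $\sim\sigma^2 m/n$, which by standard extreme-value asymptotics is $(1+o(1))\sqrt{2(\sigma^2 m/n)\log m}$. The bookkeeping relating $m$, $\psi_n$, $\vol\M$, $\vol\mathbb{S}^{d-1}$, and the shape integral of $\phi_1$ is exactly the one used (in reverse) in Theorem \ref{thm-upper}, and it forces this asymptotic maximum to equal $(1+o(1))C_0\psi_n$ with $C_0$ as in \eqref{const-1}. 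Composing with $w$ (continuous, monotone, $w(0)=0$) and taking $\liminf$ yields the desired lower bound.

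The main difficulty is the sharpness of the leading constant. Getting $C_0$ exactly, rather than some multiple of it, requires two delicate calibrations: first, that Lemma \ref{lemma-aequi} be exploited with the \emph{sharp} counting $m\sim(\vol\M)\kappa^d\vol\mathbb{S}^{d-1}/d$ instead of the mere rate $m\asymp\kappa^d$; and second, that the $\sqrt{2\log m}$ factor in the Gaussian-maximum asymptotics be extracted with its correct coefficient, which requires either a Slepian-style decoupling argument or a direct pointwise likelihood computation that fully uses the disjointness of the bump supports and the Gaussianity of $\varepsilon$. Both calibrations mirror the geometric and kernel computations underlying the proof of Theorem \ref{thm-upper}, so the natural tactic is to pair each step there with its minimax dual here rather than redo the Riemannian volume arithmetic.
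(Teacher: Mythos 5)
Your overall architecture matches the paper's: the upper half is dispatched by Theorem \ref{thm-upper}, and the lower bound proceeds by planting kernel-shaped bumps of height $L\kappa^{-\beta}=C_0\psi_n$ at asymptotically equidistant centers with disjoint supports and then reducing to a Gaussian sequence problem for the bump amplitudes. This is exactly the Korostelev-style construction the paper uses. That said, there are two substantive differences from the paper worth flagging.

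First, the paper takes the continuous parameter box $\mathcal{C}(\kappa,\{x_i\})=\{\sum_i\theta_iJ_{\kappa,x_i}:|\theta_i|\le 1\}$, not the hypercube $\{-1,+1\}^m$, and the reduction to the sequence model is done via a complete-class (sufficiency) argument: since the $J_{\kappa,x_i}$ have disjoint supports, the local projections $z_i=\sum_j J_{\kappa,x_i}(x_j)y_j/\sum_j J_{\kappa,x_i}^2(x_j)$ are jointly sufficient for $(\theta_1,\dots,\theta_N)$, so one may restrict attention to estimators of the form $\hat f_n=\sum_i\hat\theta_iJ_{\kappa,x_i}$ with $\hat\theta_i$ measurable in $(z_1,\dots,z_N)$. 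Your phrasing --- ``observations in the Voronoi cell $A_j$ carry essentially all information about $\theta_j$'' --- points at the same idea, but the sufficiency statement holds on the support balls $\overline B_{z_j}(\kappa^{-1})$, not the Voronoi cells, and it is the exact orthogonality of the $J_{\kappa,x_i}$ (disjoint supports) that makes it rigorous.

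Second, and more importantly, the step where you bound the Bayes risk below by ``the expected maximum of $m$ nearly independent centered Gaussians with variance $\sim\sigma^2m/n$'' is not a correct inequality as stated. The estimator does see the data, so its coordinate errors $\hat\theta_j-\theta_j$ are \emph{not} pure noise of that variance; a linear shrinkage $\hat\theta_j=az_j$ trades bias for variance and can beat the raw noise maximum. What is actually needed (and what the paper invokes as a known result rather than proving) is the sharp bounded-normal-mean fact: if $z_i=\theta_i+\sigma_N\xi_i$ with $|\theta_i|\le 1$, $\xi_i$ i.i.d.\ $N(0,1)$, and $\sigma_N^{-1}\le\sqrt{2-\delta}\sqrt{\log N}$, then
\begin{equation*}
\inf_{\hat\theta}\sup_{|\theta_i|\le1}\E\,w\bigl(\|\hat\theta-\theta\|_\infty\bigr)\longrightarrow w(1),\qquad N\to\infty.
\end{equation*}
The paper checks this calibration on $\sigma_N$ (through the choice of $\kappa$ with the auxiliary constant $C_0'$ and the counting lemma giving $N\gtrsim\kappa^d$) and then appeals to the statement above, essentially from \cite{Ko}. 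You acknowledge that ``a Slepian-style decoupling argument or a direct pointwise likelihood computation'' would be needed here, so you are aware this is the crux, but the proposal as written papers over it with the heuristic Gaussian-maximum identity. Your sharp-counting formula also has the ratio $\vol\mathbb{S}^{d-1}/d$ inverted relative to Lemma \ref{lemma-aequi} ($C_1=d\,\vol\M/\vol\mathbb{S}^{d-1}$, so $m\sim d\,\vol\M\,\kappa^d/\vol\mathbb{S}^{d-1}$); this is arithmetic rather than structural, but it matters when one is chasing the exact constant $C_0$.

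In short: same bump-and-sequence-model route as the paper, same localization idea, same calibration target; what is missing is the actual minimax lower bound for the reduced bounded-normal-mean problem. The paper does not prove it either, but it states the precise sufficient condition on $\sigma_N$ and invokes the known result, whereas your argument substitutes an expected-max-of-noise heuristic that, taken literally, is false.
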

In particular, we have the immediate result.
\begin{corollary}\label{cor1}
For the regression model {\rm (\ref{wn.1})} and the estimator {\rm (\ref{estimator})},
\[
\sup_{f\in\Lambda(\beta, L)}
\E \left\| \hat{f} - f\right\|_\infty
\sim
C_0 \, \left( \frac{\log  n}n
\right)^{\beta/(2\beta+d)}
\]
as $n \to \infty$.
\end{corollary}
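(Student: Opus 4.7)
The corollary follows by combining the two preceding theorems, so the proposal is quite short. I would specialize the loss function $w$ to the identity-on-$[0,\infty)$, extended to be even, namely $w(u)=|u|$. This choice satisfies all the standing hypotheses on $w$ (vanishing at $0$, symmetric, nondecreasing on $[0,\infty)$, polynomial growth), and since $\|\hat f-f\|_\infty\ge 0$, only the values of $w$ on the nonnegative axis are relevant. With this choice, $w(\psi_n^{-1}\|\hat f-f\|_\infty)=\psi_n^{-1}\|\hat f-f\|_\infty$ and $w(C_0)=C_0$.

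The upper bound is immediate from Theorem~\ref{thm-upper}: dividing through by $\psi_n$,
\[
\limsup_{n\to\infty}\,\sup_{f\in\Lambda(\beta,L)}\E\bigl(\psi_n^{-1}\|\hat f-f\|_\infty\bigr)\le C_0.
\]
For the matching lower bound, I would observe that $\hat f$ is a particular estimator, so its sup-norm risk is at least the minimax risk defined in (\ref{minimax-risk}):
\[
\inf_{\tilde f}\sup_{f\in\Lambda(\beta,L)}\E\bigl(\psi_n^{-1}\|\tilde f-f\|_\infty\bigr)\;\le\;\sup_{f\in\Lambda(\beta,L)}\E\bigl(\psi_n^{-1}\|\hat f-f\|_\infty\bigr).
\]
By Theorem~\ref{thm1} applied with $w(u)=|u|$, the left-hand side converges to $C_0$, so
\[
\liminf_{n\to\infty}\,\sup_{f\in\Lambda(\beta,L)}\E\bigl(\psi_n^{-1}\|\hat f-f\|_\infty\bigr)\ge C_0.
\]

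Combining the two inequalities gives
\[
\lim_{n\to\infty}\,\psi_n^{-1}\sup_{f\in\Lambda(\beta,L)}\E\|\hat f-f\|_\infty = C_0,
\]
which, after substituting $\psi_n=(\log n/n)^{\beta/(2\beta+d)}$, is exactly the asymptotic equivalence claimed. There is no real obstacle here; the only thing to be careful about is checking that $w(u)=|u|$ is admissible in both theorems (in particular that Theorem~\ref{thm1} covers this sharp-constant $w$, which it does by its statement), and ensuring the $\sim$ conclusion is read as the limit of the ratio being $1$, which is precisely the content of the two matching $\limsup$ and $\liminf$ bounds.
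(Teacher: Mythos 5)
Your proof is correct and spells out exactly the reasoning the paper leaves implicit when it calls the corollary ``immediate'': specialize to $w(u)=\lvert u\rvert$ (admissible by the stated hypotheses on $w$), use Theorem~\ref{thm-upper} for the $\limsup$ upper bound, and use Theorem~\ref{thm1} together with the trivial bound $\sup_f \E(\cdot) \ge \inf_{\tilde f}\sup_f \E(\cdot)$ for the matching $\liminf$. No gaps; this is the intended argument.
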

We note that the above generalizes earlier one-dimensional results in \cite{Ko,konu}, where the domain is
the unit interval, whereas \cite{kl} generalizes this result to
higher dimensional unit spheres.

Now that a sharp sup--norm minimax estimator has been found we would like to see how we can use this for topological data analysis.  The key is the sup--norm bound on the bottleneck distance for persistence diagrams.  In particular, for the regression function $f$ in (\ref{wn.1}) and $\hat f$ the estimator (\ref{estimator}), we have the persistence diagram $\D(f)$ as well as an estimator of the persistence diagram $\D({\hat f})$.  Using the results of Section \ref{ssec:botdist}, and in particular (\ref{eq:stab.inequality}), we have \beq\label{pd-ineq} d_B\left(\D({\hat f}),\D(f)\right) \leq \left\|{\hat f}-f\right\|_\infty . \eeq Let $\Lambda_t(\beta,L)$ denote the subset of tame functions in $\Lambda(\beta,L)$.  By corollary \ref{cor1}, the following result is immediate.

\begin{corollary}\label{cor2}
For the nonparametric regression model {\rm (\ref{wn.1})}, let
$\hat{f}$ be defined by {\rm (\ref{estimator})}.
Then
for $0 < \beta \leq 1$ and $L > 0$,
\[
\sup_{f \in \Lambda_t (\beta , L)}
\mathbb{E} d_B\left(\D(\hat{f}),\D(f)\right)
\leq
L^{d/(2\beta+d)}\left (  \frac{\sigma^2 {\rm vol}\,\M \ (\beta+d)d^2}
{{\rm vol}\, \mathbb{S}^{d-1} \beta^2} \
\frac{\log n}n \right)^{\beta/(2\beta+d)}
\]
as $n \to 0$.
\end{corollary}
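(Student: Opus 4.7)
The plan is to combine the stability inequality for persistence diagrams with the sup--norm minimax result already established. First I would observe that for any $f \in \Lambda_t(\beta,L) \subseteq \Lambda(\beta,L)$, both $f$ and $\hat f$ are tame: $f$ by assumption, and $\hat f$ because the estimator in (\ref{estimator}) is a finite linear combination $\sum_{j=1}^m \hat a_j \mathrm{1}_{A_j}$ of indicator functions on a finite partition of $\M$, so $\hat f$ takes only finitely many values and its sublevel sets are finite unions of the pieces $A_j$. Consequently $H_k(\M_{\hat f \leq r})$ is finite dimensional, the set of homological critical values is finite, and the persistence diagram $\D(\hat f)$ is well-defined.

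With tameness in hand, the stability inequality (\ref{eq:stab.inequality}) of Cohen--Steiner, Edelsbrunner, and Harer applies pointwise on the sample space, giving the almost sure bound (\ref{pd-ineq}),
\[
d_B\bigl(\D(\hat f), \D(f)\bigr) \leq \bigl\|\hat f - f\bigr\|_\infty.
\]
Taking expectations and then the supremum over $f \in \Lambda_t(\beta,L)$, and using $\Lambda_t(\beta,L)\subseteq \Lambda(\beta,L)$, yields
\[
\sup_{f\in\Lambda_t(\beta,L)} \E\, d_B\bigl(\D(\hat f),\D(f)\bigr) \leq \sup_{f\in\Lambda(\beta,L)} \E\bigl\|\hat f - f\bigr\|_\infty.
\]

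Finally I would invoke Theorem \ref{thm-upper} with weight $w(u)=|u|$ (which satisfies the required conditions) to bound the right-hand side by $C_0 \psi_n$ for $n$ sufficiently large, or equivalently use the asymptotic equivalence in Corollary \ref{cor1}. Substituting the explicit values of $C_0$ from (\ref{const-1}) and $\psi_n$ from (\ref{rate-1}) and combining the two factors under the single exponent $\beta/(2\beta+d)$ produces exactly the stated constant times $(\log n / n)^{\beta/(2\beta+d)}$.

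There is no real obstacle in the argument; the only point requiring a moment of thought is the tameness of $\hat f$, and that is automatic from its simple-function form. Everything else is a direct composition of (\ref{eq:stab.inequality}) with the sup--norm rate already proved in Section \ref{sec.main}.
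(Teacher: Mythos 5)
Your proof is correct and follows exactly the paper's route: apply the stability inequality \eqref{eq:stab.inequality} pointwise to obtain \eqref{pd-ineq}, take expectations and the supremum over $\Lambda_t(\beta,L)\subseteq\Lambda(\beta,L)$, and then invoke Corollary~\ref{cor1} (equivalently Theorem~\ref{thm-upper} with $w(u)=|u|$) together with the expressions \eqref{const-1} and \eqref{rate-1} for $C_0$ and $\psi_n$. Your explicit check that $\hat f$ is tame --- since it is a simple function with finitely many values on the finite partition $\{A_j\}$, so its sublevel sets change only at finitely many levels and have finite-dimensional homology --- is a worthwhile clarification that the paper leaves implicit; the only caveat worth flagging is that the original Cohen--Steiner--Edelsbrunner--Harer stability theorem is stated for \emph{continuous} tame functions, while the step function $\hat f$ is discontinuous, so one is implicitly relying on a mild extension of the stability result (which does hold, e.g.\ via the persistence-module interleaving argument), consistent with the tameness-only hypothesis in \eqref{eq:stab.inequality}.
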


\section{Discussion}
\label{sec.app}
To calculate the persistence diagrams of the sublevel sets
of $\widehat f$, we suggest that because of the way $\widehat f$ is
constructed, we can calculate its persistence diagrams using a
triangulation, $\T$ of the manifold in question.

We can then filter $\T$ using $\widehat f$ as follows.  Let $r_1 \leq r_2 \leq
\ldots \leq r_m$ be the ordered list of values of $\widehat f$ on the
vertices of the triangulation.  For $1 \leq i \leq m$, let $\T_i$ be
the subcomplex of $\T$ containing all vertices $v$ with $\widehat f(v)
\leq r_i$ and all edges whose boundaries are in $\T_i$ and all faces
whose boundaries are in $\T_i$.
We obtain the following filtration of $\T$,
\begin{equation*}
  \phi = \T_0 \subseteq \T_1 \subseteq T_2 \subseteq \cdots \subseteq \T_m = \T.
\end{equation*}
Because the critical points of $\widehat f$ only occur at the vertices of $\T$, Morse theory guarantees that the persistent homology of the sublevel sets of $\widehat f$ equals the persistent homology of the above filtration of $\T$.

Using the software Plex,~\cite{plex}, we calculate the persistent
homology, in degrees $0$, $1$, $2$, ..., $d$ of the triangulation $\T$
filtered according to the estimator.  Since
the data will be $d$--dimensional, we do not expect any interesting
homology in higher degrees, and in fact, most of the interesting features
would occur in the lower degrees.

A demonstration of this is provided in \cite{ckb} for brain image
data, where the topology of cortical thickness in an
autism study takes place.
The persistent
homology, in degrees $0$, $1$ and $2$ is calculated for 27 subjects.  Since
the data is two--dimensional, we do not expect any interesting
homology in higher degrees.
For an initial comparison of the autistic subjects and control subjects, we take the
union of the persistence diagrams, see Fig. 4 in \cite{ckb} page 392.
We note the difference in the topological structures as seen through
the persistent homologies between the
autistic and control group, particularly, as we move away from the diagonal
line.  A test using concentration pairings reveal group differences.

\section{Proofs}
\label{sec.proofs}
\setcounter{equation}{0}
Our proofs will use the ideas from \cite {kl} and \cite {Ko}.
\subsection{Upper Bound}
\label{ssec.upper}
We first prove the earlier lemma.

\begin{proof}[Proof of Lemma \ref{lemma-aequi}]
Let $(\mathcal{U}, (x^i) ) $ be any normal coordinate chart centered at $x_i$, then the
components of the metric at $x_i$ are $g_{ij} = \delta_{ij}$, so
$\sqrt{|g_{ij}(x_i)|} = 1$, see \cite {L}.  Consequently,
\begin{eqnarray*}
{\rm vol}\,(\overline{B}_{x_i}(\lambda^{-1}))&=& \int_{{B}(\lambda^{-1})}
\sqrt{|g_{ij}(\mbox{exp}_{x_i}(x))|}dx 
=\sqrt{|g_{ij}(\exp_{x_i}(t))|}
\int_{{B}(\lambda^{-1})}dx\\
&\sim&{\rm vol}\,(\mathbb{B}(\lambda^{-1}))
={\rm vol}\,(\mathbb{B}(1))\lambda^{-d}={\rm vol}\,(\mathbb{S}^{d-1})\lambda^{-d}/d \ .
\end{eqnarray*}
The first line uses the integration transformation, where
$\exp_{x_i}: {B}(\lambda^{-1}) \to \overline{B}_{x_i}(\lambda^{-1})$
is the exponential map from the tangent space $\mbox{T}\M_{x_i} \to \M$.
The second line uses the integral mean value theorem and $r$ is the radius from the
origin to point $x$ in the Euclidean ball $\mathbb{B}(\lambda^{-1})$.
The third line is asymptotic as $\lambda \to \infty$ and uses
the fact that $|g_{ij}(\mbox{exp}_{x_i}(t))| \to 1$ when $\lambda \to \infty$.
In the fourth line ${\rm vol}\,(\mathbb{B}(1))$ is the volume of $d$-dimensional
Euclidean unit
ball. The last line uses the fact ${\rm vol}\,(\mathbb{B}(1)) = {\rm vol}\,
(\mathbb{B}^{d-1})/d$.

Let $\lambda'=\lambda'(m) > 0$ be the smallest number such that
$\overline{B}_{x_i}((\lambda')^{-1})$ are disjoint. Then
$\lambda^{-1} = c(m) \times (\lambda')^{-1}$, where $c(m)>1 $ and $c(m) \to 1 $ as $m \to \infty$.  Consequently
\[
\vol(\M) \geq \sum^m_{i=1}\vol(\overline{B}_{x_i}((\lambda')^{-1}))\sim
m\vol(\mathbb{S}^{d-1})(\lambda')^{-d}/d .
\]
Thus $
 \limsup_{m \to \infty} m\lambda(m)^{-d}= \limsup_{m \to \infty} c(m)^d
 m(\lambda')^{-d}\leq \frac{d\vol(\M)}{\vol(\mathbb{S}^{d-1})}$.
\end{proof}

We now calculate the asymptotic variance of $\hat{a}_j$ for
$j=1, \ldots , m$.  Let
\[
M=\left[\frac{n{\rm vol}(\overline{B}_{x_{i_j}}(\kappa^{-1}))}{{\rm vol}(\M)}
\right].
\]
Then, 
\begin{eqnarray*}
\mbox{var}(\hat{a}_j)
&=& \frac{\sigma^2\sum_{i=1}^n
K_{\kappa, x_{i_j}}^2(x_i)}{(\sum_{i=1}^n K_{\kappa, x_{i_j}}(x_i))^2}\\
&\sim& \frac{\sigma^2{\rm vol}\,(\overline{B}_{x_{i_j}}(\kappa^{-1}))
\int_{\overline{B}_{x_{i_j}}(\kappa^{-1})}
(1-(\kappa \rho(x_{i_j}, \omega))^\beta)^2 d\omega }
{M (\int_{\overline{B}_{x_{i_j}}(\kappa^{-1})} (1-(\kappa \rho(x_{i_j}, \omega))^\beta)
d\omega)^2}\\
&=&
\frac{\sigma^2{\rm vol}\,(\overline{B}_{x_{i_j}}(\kappa^{-1}))
\int_{{B}(\kappa^{-1})} (1-(\kappa r)^\beta)^2 \sqrt{|g_{ii_j}(\exp_{x_{i_j}}(x))|}dx}
{M\ (\int_{{B}(\kappa^{-1})} (1-(\kappa r)^\beta)
\sqrt{|g_{ii_j}(\exp_{x_{i_j}}(x)))|}dx)^2}.
\end{eqnarray*}
This last expression evaluates as
$$
\frac
{
\sigma^2{\rm vol}\,(\overline{B}_{x_{j_i}}(\kappa^{-1}))
\sqrt{|g_{ii_j}(\exp_{x_{i_j}}(t))|}
\int_0^{\kappa^{-1}}\int_0^\pi\cdots\int_0^\pi\int_0^{2\pi} (1-(\kappa r)^\beta)^2
r^{d-1}dr d\sigma_{d-1}
}
{
M\ |g_{ii_j}(\exp_{x_{i_j}(t'))|}(\int_0^{\kappa^{-1}}\int_0^\pi\cdots
\int_0^{\kappa^{-1}}\int_0^\pi\cdots\int_0^\pi\int_0^{2\pi} (1-(\kappa r)^\beta)^2
r^{d-1}dr d\sigma_{d-1})^2
}
$$
so that we have
\begin{eqnarray*}
\mbox{var}(\hat{a}_j)
&\sim& \frac{\sigma^2{\rm vol}\,(\overline{B}_{x_{i_j}}(\kappa^{-1}))d{\rm vol}\,
(\mathbb{B}^d)\int_0^{\kappa^{-1}} (1-(\kappa r)^\beta)^2 r^{d-1}dr}{M \ d^2{\rm vol}\,
(\mathbb{B}^d)^2(\int_0^{\kappa^{-1}} (1-(\kappa r)^\beta)r^{d-1}dr)^2}
\\
&=& \sigma^2\kappa^d\frac{{\rm vol}\,(\M)2d(\beta+d)}{n{\rm vol}\,
(\mathbb{S}^{d-1})(2\beta+d)}
\end{eqnarray*}
as $n \to \infty$, where $d\sigma_{d-1}$ is the spherical measure on
$\mathbb{S}^{d-1}$.

\begin{lemma}
\label{Prob}
\[
\lim_{n \to \infty} \Pp \left(\psi^{-1}_n \parallel\hat{f_n}
- \E\hat{f_n}\parallel_\infty >(1+\delta)C_0\frac{2\beta}{2\beta+d}\right) = 0
\]
\end{lemma}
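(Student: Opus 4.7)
Since the sets $\{A_j\}_{j=1}^m$ partition $\M$, the difference $\hat{f}_n - \E\hat{f}_n = \sum_j(\hat a_j - \E \hat a_j)\mathbf{1}_{A_j}$ is piecewise constant, so
\[
\|\hat{f}_n - \E \hat{f}_n\|_\infty \;=\; \max_{1 \le j \le m}\bigl|\hat{a}_j - \E \hat{a}_j\bigr|.
\]
Each $\hat a_j$ is a deterministic linear combination of the Gaussian observations $y_i$, hence $\hat a_j - \E \hat a_j$ is a centered Gaussian with some variance $v_{n,j}^2$. By the asymptotic equidistance of the $x_{i_j}$ and the variance computation carried out immediately before the lemma, all $v_{n,j}^2$ are asymptotic to the common value
\[
v_n^2 \;=\; \sigma^2 \kappa^d \frac{\vol(\M)\,2d(\beta+d)}{n\,\vol(\mathbb{S}^{d-1})(2\beta+d)}.
\]

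The plan is to apply the standard Gaussian tail inequality $\Pp(|Z|>t)\le 2e^{-t^2/(2\mathrm{var}(Z))}$ termwise followed by a union bound over the $m$ pieces. With $\tau_n = (1+\delta)\,C_0\,\frac{2\beta}{2\beta+d}\,\psi_n$ this yields
\[
\Pp\bigl(\|\hat{f}_n - \E\hat{f}_n\|_\infty > \tau_n\bigr)
\;\le\; 2m \exp\!\left(-\frac{\tau_n^2}{2v_n^2}(1+o(1))\right).
\]
The main calculation is to show that the exponent equals $(1+\delta)^2\frac{d}{2\beta+d}\log n$ to leading order. Substituting $\kappa^d = (C_0\psi_n/L)^{-d/\beta}$ and using the identity $\psi_n^{(2\beta+d)/\beta} = (\log n)/n$, the factors of $n$ and $\psi_n$ cancel exactly, leaving a constant multiple of $\log n$; that constant collapses to $\frac{d}{2\beta+d}$ precisely because of the calibration of $C_0$ in (\ref{const-1}), which yields $C_0^2(C_0/L)^{d/\beta} = \sigma^2\vol(\M)(\beta+d)d^2/(\vol(\mathbb{S}^{d-1})\beta^2)$.

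Finally, the choice of $m$ together with Lemma \ref{lemma-aequi} gives $m = O((n/\log n)^{d/(2\beta+d)})$, so $\log(2m) \le \frac{d}{2\beta+d}\log n + O(\log\log n)$. Combining this with the exponent computation, the probability is bounded by $n^{-\frac{d}{2\beta+d}((1+\delta)^2-1) + o(1)}$, which tends to $0$ for any $\delta>0$. The only delicate part of the proof is the algebraic verification that $C_0$ was chosen so that the exponent of $\log n$ is exactly $\frac{d}{2\beta+d}$ at the critical constant; the rest is a routine maximal Gaussian argument. Small $o(1)$ corrections arising from $v_{n,j}^2/v_n^2 \to 1$ and from the asymptotics of the variance formula can be absorbed by replacing $\delta$ with a slightly smaller positive number for all sufficiently large $n$.
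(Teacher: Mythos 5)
Your proposal is correct and follows essentially the same route as the paper: reduce $\|\hat f_n - \E\hat f_n\|_\infty$ to a maximum of $m$ centered Gaussians via the piecewise-constant structure, apply a Gaussian tail bound and a union bound, and use the precise calibration of $C_0$ (equivalently, the identity $C_0^2(C_0/L)^{d/\beta} = \sigma^2\vol(\M)(\beta+d)d^2/(\vol(\mathbb{S}^{d-1})\beta^2)$, which I checked) to make the normalized variance $D_n^2 := \psi_n^{-2}\mathrm{var}(\hat a_j) \sim 2\beta^2 C_0^2/(d(2\beta+d)\log n)$, so that the tail exponent becomes $(1+\delta)^2\frac{d}{2\beta+d}\log n$ and dominates $\log m \sim \frac{d}{2\beta+d}\log n$. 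The paper phrases the same computation via the ratio $y^2/D_n^2 = 2d(1+\delta)^2\log n/(2\beta+d)$ and arrives at the same conclusion $n^{-d((1+\delta)^2-1)/(2\beta+d)+o(1)}\to 0$; the only differences are cosmetic (your explicit factor of $2$ in the two-sided Gaussian tail, and your explicit handling of the $o(1)$ corrections by shrinking $\delta$).
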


\begin{proof}
Denote $Z_n(x) = \hat{f_n}(x) - \E\hat{f_n}(x) $. Define
\[
D^2_n = \mbox{var}(\psi^{-1}_n Z_n(x_j)) = \psi^{-2}_n\mbox{var}(\hat{a}_j)
\sim \frac{2\beta^2C^2_0}{d(2\beta+d)\log n}.
\]
Denote $y = (1+\delta)C_02\beta/(2\beta+d)$. Then
\[
\frac{y^2}{D^2_n} =\frac{2d(1+\delta)^2\log n}{2\beta + d}.
\]
For sufficiently large $n$, $Z_n(x_j) \sim N(0, \psi^{2}_n D^2_n)$, hence
as $n \to \infty$,
\begin{eqnarray*}
\Pp\left(\parallel\psi^{-1}_n Z_n\parallel_\infty > y\right)
&\le&\Pp\left(\max_{i= 1, \cdots, m}\psi^{-1}_n | Z_n(x_j)| > y\right) \\
&\le&m \Pp\left(D^{-1}_n\psi^{-1}_n | Z_n(x_j)| > \frac{y}{D_n}\right) \\
&\le&m \exp\left\{-\frac{1}{2}\frac{y^2}{D^2_n}\right\}=m \exp\left\{-\frac{d(1+\delta)^2\log n}{2\beta + d}\right\}.
\end{eqnarray*}
Therefore
$$
\Pp\left(\parallel\psi^{-1}_n Z_n\parallel_\infty > y\right)
\le n^{-d((1+\delta)^2-1)/(2\beta+d)}(\log n)^{-d/(2\beta+d)}
D_n\left(\frac{L(2\beta+d)}{\delta C_0 d}\right)^{d/\beta}.
$$
\end{proof}

\begin{lemma}
\label{bias}
\[
\limsup_{n \to \infty} \sup_{f\in \Lambda(\beta, L)}\psi^{-1}_n \parallel f
- \E\hat{f_n}\parallel_\infty \leq (1+\delta)C_0\frac{d}{2\beta+d}
\]
\end{lemma}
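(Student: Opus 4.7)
The plan is to bound the bias pointwise on each Voronoi cell $A_j$ and then take suprema. Fix $x \in A_j$. Since $\hat{f}(x) = \hat{a}_j$ is a linear statistic in the $y_i$'s with weights $K_{\kappa,x_{i_j}}(x_i)/\sum_i K_{\kappa,x_{i_j}}(x_i)$, the expectation is
\[
\E \hat{a}_j = \frac{\sum_{i=1}^n K_{\kappa,x_{i_j}}(x_i)\,f(x_i)}{\sum_{i=1}^n K_{\kappa,x_{i_j}}(x_i)},
\]
so I split the bias as $f(x)-\E\hat{a}_j = \bigl(f(x)-f(x_{i_j})\bigr) + \bigl(f(x_{i_j})-\E\hat{a}_j\bigr)$. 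The first piece will be controlled by the geometry of the Voronoi partition (contributing the $\delta$ factor) and the second by the shape of the kernel (contributing the main $d/(2\beta+d)$ factor).

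For the first piece, since $x \in A_j$ the covering property from Lemma~\ref{lemma-aequi} gives $\rho(x,x_{i_j}) \leq \lambda(m)^{-1}$ with $m\lambda(m)^{-d} \leq C_1$. With the specific choice $m \sim C_1\bigl(L(2\beta+d)/(\delta C_0 d \psi_n)\bigr)^{d/\beta}$ from Section~\ref{ssec:estimator}, this yields $L\lambda(m)^{-\beta} \leq \delta C_0 d\psi_n/(2\beta+d)$, so by the H\"older condition $|f(x)-f(x_{i_j})| \leq L\rho(x,x_{i_j})^\beta \leq \delta C_0 d \psi_n/(2\beta+d)$, uniformly in $f \in \Lambda(\beta,L)$, $x$, and $j$.

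For the second piece I use the H\"older condition inside the kernel average:
\[
|f(x_{i_j})-\E\hat{a}_j| \leq L\kappa^{-\beta}\cdot \frac{\sum_i K_{\kappa,x_{i_j}}(x_i)\bigl(\kappa\rho(x_i,x_{i_j})\bigr)^\beta}{\sum_i K_{\kappa,x_{i_j}}(x_i)}.
\]
The key observation is that on the support of the kernel, $(\kappa\rho)^\beta = 1 - K_{\kappa,x_{i_j}}$, so the ratio above equals $1-\sum K^2/\sum K$. Passing from Riemann sums to integrals as $n\to\infty$ (just as in the variance calculation preceding Lemma~\ref{Prob}, using normal coordinates at $x_{i_j}$ and the fact that $|g_{ij}(\exp_{x_{i_j}}(\cdot))| \to 1$ as $\kappa\to\infty$), then changing to polar coordinates and substituting $u=\kappa r$, the ratio converges to
\[
\frac{\int_0^1(1-u^\beta)u^{\beta+d-1}\,du}{\int_0^1(1-u^\beta)u^{d-1}\,du} = \frac{\beta/((\beta+d)(2\beta+d))}{\beta/(d(\beta+d))} = \frac{d}{2\beta+d}.
\]
Since $L\kappa^{-\beta} = C_0\psi_n$ by the definition of $\kappa$, this piece contributes $(1+o(1))\,C_0\psi_n\, d/(2\beta+d)$, uniformly in $j$ and in $f \in \Lambda(\beta,L)$.

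Summing the two bounds and multiplying by $\psi_n^{-1}$, the $\limsup$ is at most $(1+\delta)C_0 d/(2\beta+d)$ as required. The main obstacle is the uniformity of the Riemann-sum-to-integral approximation across all $j=1,\ldots,m$ and all $f$: this requires that the design points $\{x_i\}$ fill $\overline{B}_{x_{i_j}}(\kappa^{-1})$ densely enough for every $j$ as $n\to\infty$, and that the local Jacobian factor $\sqrt{|g|}$ is uniformly close to $1$ across centers---both of which follow from compactness of $\M$ and the asymptotic equidistance hypothesis combined with $\kappa^{-1}\to 0$.
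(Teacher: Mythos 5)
Your proposal is correct and follows essentially the same route as the paper: the same decomposition of the bias into a Voronoi-cell term (controlled by the choice of $m$ and Lemma~\ref{lemma-aequi}, yielding the $\delta$ factor) and a kernel-averaging term (yielding $C_0\psi_n\,d/(2\beta+d)$ after passing to polar coordinates in normal coordinates and letting $\kappa\to\infty$). Your rewriting of the kernel ratio via $(\kappa\rho)^\beta = 1-K$ is a tidy algebraic shortcut, but the underlying integrals $\int_0^1(1-u^\beta)u^{\beta+d-1}\,du$ and $\int_0^1(1-u^\beta)u^{d-1}\,du$ are exactly those evaluated in the paper's argument, so this is a cosmetic rather than a substantive difference.
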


\begin{proof}
We note that
\begin{eqnarray*}
\parallel f - \E\hat{f}\parallel_\infty
&=&\max_{j= 1, \ldots, m} \sup_{x\in A_j} |f(x) - \E\hat{f}(x)|\\
& \le&\max_{j= 1, \ldots, m} \sup_{x\in A_j} \left(|f(x) - f(x_j)| +
| \E\hat{f}(x_j) - f(x_j)|\right)\\
&\le&\max_{j= 1, \ldots, m} \left(| \E\hat{f}(x_j) - f(x_j)| + L
\sup_{x\in A_j} \rho(x, x_j)^\beta \right).
\end{eqnarray*}
When $m$ is sufficiently large, $A_j \subset \overline{B}_{x_j}(\lambda^{-1})$,
hence by Lemma \ref{lemma-aequi}
\[
\limsup_{n\to \infty}\sup_{x\in A_j}\rho(x, x_j) \leq \limsup_{n\to \infty}
\lambda^{-1} \leq \limsup_{n\to \infty}\left(\frac{C_1}{m}\right)^{1/d}.
\]
Thus
\[
\limsup_{n\to \infty}\sup_{x\in A_j}\psi^{-1}_n
\rho(x, x_j)^\beta \leq \limsup_{n\to \infty}\psi^{-1}_n
\left(\frac{C_1}{m}\right)^{\beta/d} \le \frac{\delta C_0 d}{L(2\beta+d)}.
\]
For $j = 1, \cdots, m$,
\begin{eqnarray*}
| \E\hat{f}(x_{i_j}) - f(x_{i_j})|
&=&| E\hat{a}_j - f(x_{i_j})| =  \left| \frac{
\sum_{j=1}^m K_{\kappa, x_i}(x_{i_j})f(x_{i_j})}{\sum_{j=1}^m
K_{\kappa, x_i}(x_{i_j})}- f(x_i)\right|\\
& \leq & \frac{\sum_{j=1}^m K_{\kappa, x_i}(x_{i_j})|f(x_{i_j})- f(x_i)|}
{\sum_{j=1}^m K_{\kappa, x_i}(x_{i_j})}\\
& \le & \frac{L\int_{\overline{B}_{x_i}(\kappa^{-1})}
(1-(\kappa \rho(x_i, \omega))^\beta) \rho(x_i, \omega))^\beta d\omega}
{\int_{\overline{B}_{x_i}(\kappa^{-1})} (1-(\kappa \rho(x_i, \omega))^\beta)d\omega}
\\
&\sim& \frac{L}{\kappa^\beta}\frac{d}{2\beta + d}=C_0 \psi_n \frac{d}{2\beta + d}
\end{eqnarray*}
as $n \to \infty$.
\end{proof}
\begin{proof}[\bf Proof of the upper bound]
\begin{eqnarray*}
&&\lim_{n \to \infty} \Pp\left(\psi^{-1}_n \parallel\hat{f}
- f\parallel_\infty >(1+\delta)C_0\right)\\
&&\le \lim_{n \to \infty} \Pp\left(\psi^{-1}_n \parallel\hat{f} - \E\hat{f}
\parallel_\infty + \psi^{-1}_n \parallel \E\hat{f} - f\parallel_\infty >(1+\delta)C_0\right)
\\
&&\le\lim_{n \to \infty} \Pp\left(\psi^{-1}_n \parallel\hat{f} -
\E\hat{f}\parallel_\infty +(1+\delta)C_0\frac{d}{2\beta+d}>(1+\delta)C_0\right)
\\
&&=\lim_{n \to \infty} \Pp\left(\psi^{-1}_n \parallel\hat{f} -
\E\hat{f}\parallel_\infty >(1+\delta)C_0\frac{2\beta}{2\beta+d}\right) = 0
\end{eqnarray*}
the second inequality uses Lemma~\ref{bias} and the last line uses Lemma~\ref{Prob}.

Let $g_n$ be the density function of $\psi^{-1}_n \parallel\hat{f} - f\parallel_\infty$, then
\begin{eqnarray*}
&&\limsup_{n \to \infty}\E w^2(\psi_n^{-1}\parallel\hat{f}_n - f\parallel_\infty)\\
&&=\limsup_{n \to \infty}\left(\int_0^{(1+\delta)C_0}w^2(x)g_n(x)dx+
\int^\infty_{(1+\delta)C_0}w^2(x)g_n(x)dx\right)\\
&&\le w^2((1+\delta)C_0) + \limsup_{n \to \infty}
\int^\infty_{(1+\delta)C_0}x^\alpha g_n(x)dx = w^2((1+\delta)C_0) \le B < \infty,
\end{eqnarray*}
where the constant $B$ does not depend on $f$, the third lines uses the
assumption on the power growth and non-decreasing property of the loss
function $w(u)$. Using the Cauchy-Schwartz inequality, we have
\begin{eqnarray*}
&&\limsup_{n \to \infty}\E
w(\psi_n^{-1}\parallel\hat{f}_n - f\parallel_\infty)
\\
&&\le w((1+\delta)C_0)\limsup_{n \to \infty}\Pp\left(\psi^{-1}_n \parallel\hat{f}
- f\parallel_\infty \le(1+\delta)C_0\right)
\\
&& + \limsup_{n \to \infty}\left\{\E w^2(\psi_n^{-1}\parallel\hat{f}_n
- f\parallel_\infty) \Pp(\psi^{-1}_n \parallel\hat{f} - f\parallel_\infty
>(1+\delta)C_0)\right\}^{1/2}
\\
&&=w((1+\delta)C_0).
\end{eqnarray*}
\end{proof}

\subsection{The lower bound}
\label{ssec.lower}
We now prove the lower bound result on $\M$.
\begin{lemma}
For sufficiently large $\kappa$, let $N=N(\kappa)$ be such that $N \to \infty$
when $\kappa \to \infty$ and $x_i \in \M, i = 1, \cdots, N$, be such that
$x_i$ are asymptotically equidistant,and such that $\overline{B}_{x_i}
(\kappa^{-1})$ are disjoint. There is a constant $0 < D < \infty$ such that
\beq \label{con2}
 \liminf_{\kappa \to \infty} N(\kappa)\kappa^{-d} \geq D.
\eeq
\end{lemma}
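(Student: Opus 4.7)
The plan is to read the lemma as an existence statement and explicitly construct an AEQ configuration of $N(\kappa) \geq D\kappa^d$ points whose $\kappa^{-1}$-balls are disjoint. The strategy mirrors the volume computation in Lemma \ref{lemma-aequi} but runs in the opposite direction: rather than starting from a packing and bounding $N$ from above by volume, I start from a candidate cardinality $N$, invoke asymptotic equidistance to control the minimum pairwise distance from below, and then verify disjointness of the balls.

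Concretely, fix a constant $D > 0$ to be chosen, and set $N(\kappa) := \lfloor D\kappa^d \rfloor$, which tends to $\infty$ as $\kappa \to \infty$. By the standing hypothesis that $\M$ admits asymptotically equidistant collections of every cardinality, I can select AEQ points $x_1, \ldots, x_N \in \M$. From the AEQ condition (\ref{asymp.equid}), together with the normal-coordinate volume computation in Lemma \ref{lemma-aequi} applied to the half-separation balls $\overline{B}_{x_i}\bigl(\tfrac12 \min_{i\ne j}\rho(x_i,x_j)\bigr)$, one obtains $\min_{i\ne j}\rho(x_i,x_j) \sim K N^{-1/d}$ for an explicit constant $K>0$ depending only on $\vol \M$, $d$, and $\vol \mathbb{S}^{d-1}$. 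Substituting $N = \lfloor D\kappa^d\rfloor$ gives $\min_{i\ne j}\rho(x_i,x_j) \sim KD^{-1/d}\kappa^{-1}$. Choosing any $D < (K/2)^d$ ensures $KD^{-1/d} > 2$, so for all sufficiently large $\kappa$ the minimum separation exceeds $2\kappa^{-1}$ and the balls $\overline{B}_{x_i}(\kappa^{-1})$ are pairwise disjoint. The constructed configuration then satisfies $N(\kappa)\kappa^{-d} \to D$, so $\liminf_{\kappa \to \infty} N(\kappa)\kappa^{-d} \geq D > 0$.

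The main obstacle is making the sharp scaling $\min_{i\ne j}\rho \sim K N^{-1/d}$ quantitative, since the AEQ definition (\ref{asymp.equid}) specifies only a rate up to constants. The upper bound on $\min_{i\ne j}\rho$ comes from the volume packing estimate in Lemma \ref{lemma-aequi} (disjoint half-separation balls occupy total volume at most $\vol \M$), and the matching lower bound then follows from the tightness property (\ref{asymp.equid-1}), which forces the maximum and minimum nearest-neighbor distances to be asymptotically equal. Together these pin $K = (d\,\vol \M/\vol \mathbb{S}^{d-1})^{1/d}$, which in turn determines an admissible $D$ in the final bound.
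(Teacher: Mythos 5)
Your proof is correct in spirit but runs the argument in the opposite direction from the paper. The paper's proof is a \emph{covering} argument: taking the configuration $\{x_i\}$ as given, it inflates the disjoint balls to the smallest radius $(\kappa')^{-1}$ that covers $\M$, asserts $(\kappa')^{-1} = c(\kappa)\kappa^{-1}$ with $c(\kappa)\to \text{const.} \geq 1$ (this is the point where asymptotic equidistance enters, tying the covering radius to the packing radius $\kappa^{-1}$), and then uses $\vol(\M) \leq \sum_i \vol(\overline{B}_{x_i}((\kappa')^{-1})) \sim N \vol(\mathbb{S}^{d-1})(\kappa')^{-d}/d$ to conclude $N\kappa^{-d} = c(\kappa)^{-d}N(\kappa')^{-d} \gtrsim d\vol(\M)/\vol(\mathbb{S}^{d-1})$. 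You instead read the lemma as an existence claim, fix $N = \lfloor D\kappa^d\rfloor$ up front, invoke the AEQ hypothesis to pin down the minimum separation at scale $\sim KN^{-1/d}$, and choose $D$ small enough to force disjointness of the $\kappa^{-1}$-balls. Both approaches hinge on the same normal-coordinate volume asymptotics from Lemma~\ref{lemma-aequi}, but where the paper inflates a given packing into a covering and counts volume once, you run a packing-plus-tightness argument to extract the $N^{-1/d}$ scaling and then verify disjointness. Your construction has the virtue of making explicit the step the paper leaves implicit, namely that $\kappa^{-1}$ is of the same order as the inter-point spacing (the paper simply asserts $c(\kappa)\to\text{const.}$), and it also matches more cleanly the way the lemma is actually used in the lower-bound argument, where one needs the \emph{maximal} $N$ with disjoint supports to be at least of order $\kappa^d$. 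One caveat worth flagging: the displayed AEQ condition in the paper reads $\inf_{i\neq j}\rho(z_i,z_j) \sim (\vol\M)^{1/d}/m$, which would give separation of order $m^{-1}$ rather than the $m^{-1/d}$ you use; this is almost certainly a typo for $(\vol\M/m)^{1/d}$, since otherwise Lemma~\ref{lemma-aequi} itself ($m\lambda^{-d}\leq C_1$) could not hold, so your scaling is the intended one, but you should state that you are correcting the exponent rather than quoting~(\ref{asymp.equid}) verbatim.
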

 \begin{proof}
 Let $\kappa' > 0$ be the largest number such that
 $\bigcup^{N(\kappa)}_{i=1}\overline{B}_{x_i}((\kappa')^{-1}) = \M$. Then
\[
(\kappa')^{-1} = c(\kappa) \times \kappa^{-1}
\]
where $c(\kappa)>1 $ and $c(\kappa) \to \mbox{const.} \ge 1 $ as $\kappa \to \infty$.
\[
{\rm vol}\,(\M) \leq \sum^N_{i=1}{\rm vol}\,(\overline{B}_{x_i}((\kappa')^{-1}))
\sim N{\rm vol}\,(\mathbf{S}^{d-1})(\kappa')^{-d}/d
\]
Thus
\[
 \liminf_{\kappa \to \infty} N(\kappa)\kappa^{-d}= \liminf_{\kappa \to \infty}
 c(\kappa)^{-d}N(\kappa')^{-d}\geq \mbox{const.} \times \frac{d{\rm vol}\,(\M)}{{\rm vol}\,(\mathbf{S}^{d-1})}.
\]
 \end{proof}

Let $J_{\kappa, x}: \M \to \mathbb{R}$, and
\[
J_{\kappa, x} = L\kappa^{-\beta}K_{\kappa, x}(x) = L\kappa^{-\beta}
(1-(\kappa d(x, x))^\beta)_+,
\]
where $\kappa>0, x \in \M$. Let $N=N(\kappa)$ be the greatest integer such
that there exists observations $x_i \in \M, i = 1, \cdots, N $ (with possible
relabeling) in the
observation set $\{x_i, i = 1, \cdots, n\}$ such that the functions
$J_{\kappa, x_i}$ have disjoint supports. From (\ref{con2})
\[
\liminf_{\kappa \to \infty} N(\kappa)\kappa^{-d} \geq \mbox{const}.
\]
Let
\[
\mathcal{C}(\kappa,\{x_i\})=\left\{\sum_{i=1}^N\theta_iJ_{\kappa, x_i}:
|\theta_i|\leq 1, i = 1, \cdots, N \right\},
\]
where $\mathcal{C}(\kappa,\{x_i\}) \subset\Lambda(\beta,L)$ when $0< \beta \leq 1$.
The complete class of estimators for estimating $f \in
\mathcal{C}(\kappa,\{x_i\})$ consists of all of the form
\beq \label{est}
\hat{f}_n = \sum_{i=1}^N\hat{\theta}_iJ_{\kappa, x_i}
\eeq
where $\hat{\theta}_i = \delta_i(z_1, \cdots, z_N), i = 1, \cdots, N$, and
\[
z_i=\frac{\sum_{j=1}^n J_{\kappa, x_i}(x_j)y_j}{\sum_{j=1}^n
J^2_{\kappa, x_i}(x_j)}.
\]
When $\hat{f}_n$ is of the form (\ref{est}) and $f \in \mathcal{C}(\kappa,\{x_i\})$
then
\begin{eqnarray*}
\parallel \hat{f}_n -f \parallel_\infty& \geq& \max_{i=1, \cdots, N}
|\hat{f}_n(x_i) -f(x_i)|= |J_{\kappa, x_1}(x_1)| \parallel \hat{\theta} -
\theta \parallel_\infty\\ &=& L\kappa^{-\beta}\parallel \hat{\theta} - \theta \parallel_\infty
\end{eqnarray*}
Hence
\begin{eqnarray*}
r_n &\geq & \inf_{\hat{f}_n}\sup_{f\in\mathcal{C}(\kappa,\{x_i\})}
\E w(\psi_n^{-1}\parallel\hat{f}_n - f\parallel_\infty)\\
&\geq & \inf_{\hat{\theta}}\sup_{|\theta_i|\leq 1}
\E w(\psi_\varepsilon^{-1}L\kappa^{-\beta}
\parallel\hat{\theta} - \theta \parallel_\infty),
\end{eqnarray*}
where the expectation is with respect to a multivariate normal
distribution with mean
vector $\theta$ and the variance-covariance matrix $\sigma_N^2{\bf I}_N$, where
${\bf I}_N$ is the $N \times N$ identity matrix and $\sigma^2_N = \mbox{var}(z_1)
= \sigma^2 /\sum_{j=1}^N J^2_{\kappa, x_i}(x_j)$.

Fix a small number $\delta$ such that $0< \delta < 2$ and
\[
C'_0 = L^{d/(2\beta+d)}\left ( \frac{(2-\delta){\rm vol}\,(\M)(\beta+d)d^2}
{2{\rm vol}\,(\mathbf{S}^{d-1})\beta^2}\right)^{\beta/(2\beta+d)}
\]
and
\[
\kappa=\left(\frac{C_0'\psi_\varepsilon}{L}\right)^{-1/\beta}.
\]
Since
\begin{eqnarray*}
\sigma^{-1}_N &=&\sigma^{-1}\sqrt{\sum_{j=1}^N J^2_{\kappa, x_i}(x_j)}\sim  \sqrt{ \frac{(2-\delta)d}{2\beta + d}\log n}\\
&\leq& \sqrt{ (2-\delta)(\log(\log n/n)^{-d/(2\beta+d)})}\\
&=&\sqrt{2-\delta}\sqrt{\log(\mbox{cons}\times \kappa^d)}=\sqrt{2-\delta}\sqrt{\log N}
\end{eqnarray*}
by (\ref{con2}),
it follows that if
\[
\sigma^{-1}_N \leq \sqrt{2-\delta}\sqrt{\log N}
\]
for some $0<\delta<2$, then
\[
\inf_{\hat{\theta}}\sup_{|\theta_i|\leq 1}
\E
w(\parallel\hat{\theta} - \theta
\parallel_\infty) \to w(1),
\]
as $N\rightarrow \infty$,
but
\[
\psi_n^{-1}L\kappa^{-\beta} = C_0'.
\]
By the continuity of the function $w$, we have
\[
\inf_{\hat{\theta}}\sup_{|\theta_i|\leq 1}
\E w(\psi_n^{-1}L\kappa^{-\beta}\parallel
\hat{\theta} - \theta \parallel_\infty) \to w(C_0'),
\]
when $N \to \infty$.
Since $\delta$ was chosen arbitrarily, the result follows.

\bigskip

\appendix

\section{Background on Topology}
\label{sec:app-1}
In this appendix we present a technical overview of homology as used in
our procedures. For an intensive treatment we refer the reader to the
excellent text \cite{spanier}.

 Homology is an algebraic procedure for counting holes in
 topological spaces. There are numerous variants of homology: we use
 simplicial homology with $\Z$ coefficients.
 Given a set of points $V$, a $k$-simplex is an unordered subset
 $\{v_0, v_1, \ldots, v_k\}$ where $v_i \in V$ and $v_i \neq v_j$ for
 all $i \neq j$. The faces of this $k$-simplex consist of all
 $(k-1)$-simplices of the form $\{v_0, \ldots, v_{i-1}, v_{i+1},
 \ldots, v_k\}$ for some $0\leq i \leq k$. Geometrically, the
 $k$-simplex can be described as follows: given $k+1$ points in $\R^m$
 ($m\geq k$), the $k$-simplex is a convex body bounded by the union of
 $(k - 1)$ linear subspaces of $\R^m$ of defined by all possible
 collections of $k$ points (chosen out of $k+1$ points).  A simplicial
 complex is a collection of simplices which is closed with respect to
 inclusion of faces.  Triangulated surfaces form a concrete example,
 where the vertices of the triangulation correspond to $V$. The
 orderings of the vertices correspond to an orientation.  Any abstract
 simplicial complex on a (finite) set of points $V$ has a geometric
 realization in some $\R^m$.  Let ${X}$ denote a simplicial
 complex. Roughly speaking, the homology of $X$, denoted $H_\ast(X)$,
 is a sequence of vector spaces $\{H_k(X): k = 0, 1, 2, 3, \ldots\}$,
 where $H_k(X)$ is called the $k$-dimensional homology of $X$. The
 dimension of $H_k(X)$, called the $k$-th Betti number of $X$, is a
 coarse measurement of the number of different holes in the space $X$
 that can be sensed by using subcomplexes of dimension $k$.

For example, the dimension of $H_0(X)$ is equal to the number of
connected components of $X$. These are the types of features
(holes) in $X$ that can be detected by using points and edges--
with this construction one is answering the question: are two points
connected by a sequence of edges or not? The simplest basis for
$H_0(X)$ consists of a choice of vertices in $X$, one in each
path-component of $X$. Likewise, the simplest basis for $H_1(X)$
consists of loops in $X$, each of which surrounds a
hole in $X$. For example, if $X$ is a graph, then the space
$H_1(X)$ encodes the number and types of cycles in the graph, this
space has the structure of a vector space.  Let $X$ denote a
simplicial complex. Define for each $k \geq 0$, the vector space
$C_k(X)$ to be the vector space whose basis is the set of oriented
$k$-simplices of $X$; that is, a $k$-simplex $\{v_0, \ldots , v_k\}$
together with an order type denoted $[v_0, \ldots , v_k]$ where a
change in orientation corresponds to a change in the sign of the
coefficient: $[v_0, \ldots , v_i, \ldots , v_j , \ldots , v_k] =
-[v_0, \ldots , v_j , \ldots , v_i, \ldots , v_k]$ if odd permutation
is used.

For $k$ larger than the dimension of $X$, we set $C_k(X) = 0$. The
boundary map is defined to be the linear transformation $\partial :
C_k \rightarrow C_{k-1}$ which acts on basis elements $[v_0, \ldots , v_k]$ via

\begin{equation} \label{action}
\partial[v_0, \ldots , v_k] :=
\sum_{i=0}^k (-1)^i[v_0, \ldots, v_{i-1}, v_{i+1}, \ldots , v_k].
\end{equation}
This gives rise to a chain complex: a sequence of vector spaces and linear transformations

$$\cdots \stackrel{\partial}{\rightarrow} C_{k+1} \stackrel{\partial}{\rightarrow}  C_k \stackrel{\partial}{\rightarrow}  C_{k-1} \cdots \stackrel{\partial}{\rightarrow}  C_2 \stackrel{\partial}{\rightarrow}  C_1 \stackrel{\partial}{\rightarrow}  C_0$$

Consider the following two subspaces of $C_k$: the cycles
(those subcomplexes without boundary) and the boundaries
(those subcomplexes which are themselves boundaries) formally defined
as:

\begin{itemize}

	\item $k-\mbox{cycles}$: $Z_k(X) =
	\mbox{ker}(\partial:C_k\rightarrow C_{k-1})$

	\item $k-\mbox{boundaries}$: $B_k(X) =
	\mbox{im}(\partial:C_{k+1}\rightarrow C_{k})$

\end{itemize}

A simple lemma demonstrates that $\partial\circ\partial = 0$; that is,
the boundary of a chain has empty boundary. It follows that $B_k$ is
a subspace of $Z_k$. This has great implications.  The $k$-cycles in $X$
are the basic objects which count the presence of a ``hole of dimension
k'' in $X$. But, certainly, many of the $k$-cycles in $X$ are measuring the
same hole; still other cycles do not really detect a hole at all --
they bound a subcomplex of dimension $k + 1$ in $X$.  We say that two
cycles $\zeta$ and $\eta$ in $Z_k(X)$ are homologous if their difference is a
boundary:

$$[\zeta]=[\eta]\,\,\,\leftrightarrow\,\,\, \zeta-\eta\in B_k(X).$$

The $k$-dimensional homology of $X$, denoted $H_k(X)$ is the quotient vector space

\begin{equation}\label{defhomology}
H_k(X):=\frac{Z_k(X)}{B_k(X)}.
\end{equation}

Specifically, an element of $H_k(X)$ is an equivalence class of
homologous $k$-cycles. This inherits the structure of a vector space
in the natural way $[\zeta]+[\eta] = [\zeta+\eta]$ and
$c[\zeta]=[c\zeta]$.

A map $f : X \rightarrow Y$ is a homotopy equivalence if
there is a map $g : Y \rightarrow X$ so that $f\circ g$ is homotopic
to the identity map on $Y$ and $g\circ f$ is homotopic to the identity
map on $X$. This notion is a weakening of the notion of
homeomorphism, which requires the existence of a continuous map
$g$ so that $f\circ g$ and $g\circ f$ are equal to the corresponding
identity maps. The less restrictive notion of homotopy equivalence is
useful in understanding relationships between complicated spaces and
spaces with simple descriptions. We say two spaces $X$ and $Y$ are
homotopy equivalent, or have the same homotopy type
if there is a homotopy equivalence from $X$ to $Y$ . This is denoted
by $X\sim Y$.

By arguments utilizing barycentric subdivision, one may show that the
homology $H_\ast(X)$ is a topological invariant of $X$: it is
indeed an invariant of homotopy type.  Readers familiar with the Euler
characteristic of a triangulated surface will not find it odd that
intelligent counting of simplices yields an invariant.  For a simple
example, the reader is encouraged to contemplate the ``physical''
meaning of $H_1(X)$. Elements of $H_1(X)$ are equivalence classes of
(finite collections of) oriented cycles in the $1$-skeleton of $X$,
the equivalence relation being determined by the $2$-skeleton of X.

Is it often remarked that homology is functorial, by which it is meant
that things behave the way they ought. A simple example of this which
is crucial to our applications arises as follows.  Consider two
simplicial complexes $X$ and $X'$. Let $f : X \rightarrow X'$ be a
continuous simplicial map: $f$ takes each $k$-simplex of $X$ to a
$k'$-simplex of $X'$, where $k'\leq k$.  Then, the map $f$ induces a
linear transformation $f_\# : C_k(X) \rightarrow C_k(X')$. It is a
simple lemma to show that $f_\#$ takes cycles to cycles and boundaries to
boundaries; hence there is a well-defined linear transformation on the
quotient spaces
$$f_\ast : H_k(X) \rightarrow H_k(X'),\,\, f_\ast([\zeta]) = [f_\#(\zeta)]. $$

 This is called the induced homomorphism of $f$ on
 $H_\ast$. Functoriality means that (1) if $f:X\rightarrow Y$ is
 continuous then $f_\ast:H_k(X)\rightarrow H_k(Y)$ is a group homomorphism;
 and (2) the composition of two maps $g \circ f$ induces the
 composition of the linear transformation: $(g \circ f)_\ast = g_\ast
 \circ f_\ast$.

\section{Background on Geometry}
\label{sec:app-2}
The development of Morse theory has been instrumental in classifying manifolds and represents a pathway between geometry and topology.  A classic reference is Milnor \cite{milnor}.

For some smooth $f:\M \to \R$, consider a point $p \in \M$ where in local coordinates the derivative vanishes, $\partial f /\partial x_1 =0, \ldots , \partial f /\partial x_d =0$.  Then that point is called a critical point, and the evaluation $f(p)$ is called a critical value.  A critical point $p \in \M$ is called non-degenerate if the Hessian $(\partial^2 f/\partial_i \partial_j)$ is nonsingular.  Such functions are called Morse functions.

Since the Hessian at a critical point is nondegenerate, there will be a
mixture of positive and negative eigenvalues.  Let $\eta$ be the number of
negative eigenvalues of the Hessian at a critical point called the
Morse index.
The basic Morse lemma states that at a critical point $p \in \M$ with index
$\eta$ and some neighborhood $\mathcal{U}$ of $p$, there exists local
coordinates $x=(x_1, \ldots , x_d)$ so that $x(p)=0$ and
\[
f(q)=f(p)-x_1(q)^2 - \cdots -x_\eta(q)^2 + x_{\eta + 1}(q)^2+ \cdots x_d(q)^2
\]
for all $q \in \mathcal{U}$.

Based on this result one is able to show that at a critical point $p \in \M$, with $f(p) = a$ say, that the sublevel set $\M_{f \leq a}$ has the same homotopy type as that of the sublevel set $\M_{f \leq a - \varepsilon}$ (for some small $\varepsilon > 0$) with an $\eta$-dimensional cell attached to it.  In fact, for a compact $\M$, its homotopy type is that of a cell complex with one $\eta$-dimensional cell for each critical point of index $\eta$.  This cell complex is known as a CW complex in homotopy theory, if the cells are attached in the order
of their dimension.

The famous set of Morse inequalities states that if $\beta_k$ is the $k-$th Betti
number and $m_k$ is the number of critical points of index $k$, then
\begin{eqnarray*}
\beta_0 &\leq& m_0 \\
\beta_1-\beta_0 &\leq& m_1-m_0 \\
\beta_2-\beta_1+\beta_0 &\leq& m_2-m_1+m_0 \\
&\cdots& \\
\chi (M) = \sum_{k=0}^d(-1)^k\beta_k &=& \sum_{k=0}^d(-1)^km_k
\end{eqnarray*}
where $\chi$ denotes the Euler characteristic.

\baselineskip = 10pt plus 3pt minus 3pt

\end{document}